\documentclass[11pt]{amsart}
\usepackage{latexsym}
\usepackage{amssymb}
\usepackage{amsmath}
\usepackage{dsfont}
\usepackage{fancybox,xcolor}
\usepackage{amssymb}
\usepackage[utf8]{inputenc}
\usepackage{amsfonts}
\usepackage{latexsym}
\usepackage{amsmath,systeme}
\usepackage[leqno]{mathtools}
\usepackage{amsthm}
\usepackage{graphicx}
\usepackage{mathrsfs}
\usepackage{enumitem}
\usepackage{hyperref}
\usepackage{enumerate}

\usepackage[english]{babel}

\newtheorem{theorem}{Theorem}
\newtheorem{lemma}{Lemma}

\theoremstyle{definition}

\theoremstyle{remark}

\numberwithin{equation}{section}
\usepackage{mathtools}
\mathtoolsset{showonlyrefs}

\newcommand{\R}{{\mathbb R}}
\newcommand{\N}{{\mathbb N}}

\newcommand{\E}{\mathbb{E}}

\renewcommand{\P}{\mathbb{P}}

\newcommand{\half}{\frac{1}{2}}

\newcommand{\G}{\mathcal{G}}
\newcommand{\Domain}{\mathcal{D}}
\newcommand{\B}{\mathcal{B}}
\newcommand{\C}{{\mathcal C}}
\newcommand{\Ec}{\mathcal{E}}
\newcommand{\Fc}{{\mathcal F}}
\newcommand{\Nc}{{\mathcal N}}

\begin{document}
	
\title[The convex envelope and random geometric graphs]{Finding the convex envelope of a boundary datum using random geometric graphs}

\author[A. Deshayes, N. Frevenza, A. Miranda, J. D. Rossi]{Aurelia Deshayes, Nicol\'{a}s Frevenza, Alfredo Miranda, Julio D. Rossi}

	 \address{ 
		 Aurelia Deshayes. Univ Paris Est Creteil, Univ Gustave Eiffel, CNRS, LAMA UMR8050, F-94010 Creteil,
France and IRL CNRS IFUMI-2030, Montevideo, Uruguay.
		 \newline
			\texttt{~aurelia.deshayes@u-pec.fr}; 
		\bigskip
		\newline
		\indent Nicol\'{a}s Frevenza. Departamento de M\'{e}todos Cuantitativos, FCEA,
Universidad de la Rep\'{u}blica
Gonzalo Ram\'{i}rez 1926 (11200), Montevideo, Uruguay.
\newline
			\texttt{~nicolas.frevenza@fcea.edu.uy}; 
		\bigskip
		\newline
		 \indent Alfredo Miranda. Departamento de Matem\'{a}tica, FCEyN, Universidad  de Buenos Aires, Pabell\'{o}n I, Ciudad Universitaria (1428), Buenos Aires, Argentina.   
		 \newline
		 \texttt{~amiranda@dm.uba.ar};	
		 \bigskip
		\newline
		 \indent Julio D. Rossi. Departamento de Matem\'{a}ticas y Estadistica, 
		 Universidad Torcuato Di Tella.  Av. Figueroa Alcorta 7350, Buenos Aires, Argentina.
		 \newline
		 \texttt{~julio.rossi@utdt.edu}		
}
	
	     \keywords{Convex envelope, Random graph, Viscosity solutions. \\
\indent AMS-Subj Class: 05C80, 60D05, 52A41, 35D40. }

	
\medskip

\begin{abstract}
	In this paper we approximate the convex envelope of a boundary datum inside
	 a bounded domain in the Euclidean space. We work with a random graph that is obtained as random points with 
	uniform distribution that are connected by proximity ($x\sim y$ when $|x-y|<r$).  
	On the graph we solve an equation (that approximate the first eigenvalue of the Hessian
	of a smooth function) with an exterior datum. Under appropriate 
	assumptions on $r$ we show that the unique solution to the equation in the graph 
	converges to the convex envelope of the boundary datum as the number of points 
	goes to infinity. 
	\end{abstract}

\maketitle

\section{Introduction}

The purpose of this paper is to find the convex
envelope of a boundary datum as the limit of value functions for games played on random graphs.

\subsection{General convexity} We first recall the usual notion of convexity in Euclidean space. 
	Fix a bounded smooth domain $D \subset {\mathbb{R}}^d$.
	A function $u\colon D \to  {\mathbb{R}}$ is said to be convex if, whenever $x,y \in D$ and the segment $[x,y]\coloneqq \{tx+(1-t)y: t \in (0,1)\}$ is contained in $D$, one has
	\begin{equation} \label{convexo-usual}
		u(tx+(1-t)y) \leq tu(x) + (1-t) u(y), \qquad \forall t \in (0,1).
	\end{equation}
		Convex functions play central role in many areas of mathematics, notably in optimization, where they are distinguished by a number of convenient properties. 
		For instance, a (strictly) convex function has no more than one minimum. Even in infinite-dimensional spaces, under suitable additional hypotheses, convex functions continue to satisfy such properties, and as a result, they are the most well-understood functionals in the calculus of variations. In probability theory, Jensen’s inequality shows that applying a convex function to the expectation of a random variable never exceeds the expectation of the function itself. We refer to \cite{Vel} for a general reference on convexity.

	Suppose now that a convex function $f$ is given only outside a bounded domain
	$D \subset \mathbb{R}^d,$ and we want to recover the function inside assuming that it is convex in $D$.
	A natural candidate is to look for the convex envelope inside $D$ using as boundary
	datum $f\colon \partial D \to  {\mathbb{R}}$. This convex envelope is defined as
	\begin{equation} \label{convex-envelope-usual}
		u^* (x) \coloneqq \sup \Big\{v(x) \colon v 
		\mbox{ is convex in $D$ and verifies } v|_{\partial D} \leq f \Big\}.
	\end{equation}
	In terms of a second-order partial differential equation, it was shown in \cite{OS,Ober} that a function $u$ is convex if and only if 
	\[
		\lambda_1 (D^2 u) (x)\coloneqq
		\inf \Big\{
			\langle D^2 u(x) z, z \rangle\colon z\in\mathbb{S}^{d-1}
		\Big\} \geq 0.
	\] 
	Here $\mathbb{S}^{d-1}\coloneqq \{z\in \mathbb{R}^d \colon |z|=1\}$ denotes the $(d-1)-$sphere. 
	Moreover, if $f$ is continuous on $\partial D$ and $D$ is strictly convex, then the convex envelope of $f$ inside $D$ is the unique solution of
	\begin{align}\label{convex-envelope-usual-eq}
	\begin{cases}
			\lambda_1 (D^2 u) (x) = 0 \qquad &x\in D, \\
		~~~u(x) = f (x)  \qquad &x\in  \partial D.
		\end{cases}
	\end{align}
		The equation \eqref{convex-envelope-usual-eq} is understood in the viscosity sense, and the boundary data are attained continuously. We refer to \cite{BlancRossi,HL1,OS,Ober} for proofs and to \cite{CIL,Koike} for the precise definition of being a viscosity solution.

	\subsection{An informal description of the main results} 
	We move one step further and consider a random point cloud inside a bounded domain 
(for simplicity, we take $[0,1]^d$ with $d\ge 2$), where the points are chosen independently and uniformly at random. 
	We are given a function $f$ defined on the sampled points, except at those lying inside a strictly convex open subdomain $D \subset [0,1]^d$. 
	Our aim is to reconstruct the missing values of $f$ on the points inside $D$, under the assumption that the reconstruction is convex in $D$.
	
Moreover, we assign the missing values in such a way that, as the number of sampled points tends to infinity, the reconstruction converges to the continuous optimal extension, namely to the convex envelope of $f\colon \partial D \to  {\mathbb{R}}$ inside $D$.

	To perform this task we look at the point cloud with a graph structure based on proximity: we consider the graph $\G$ whose vertices are the sampled points and whose edges satisfy $x\sim y$ whenever $|x-y|<r$. Thus $\G$ is the proximity graph associated with the point cloud.

Heuristically, for a smooth function $u$ and any direction $z\in\mathbb{S}^{d-1}$, the second-order central difference along $z$ satisfies, for $r$ small,
	\[
	\frac12 u(x+rz) + \frac12 u(x-rz) - u(x) \approx \frac12 \langle D^2 u(x) z, z \rangle r^2.
	\]
Therefore, if we aim to obtain the continuous convex envelope in the limit, we want to take
the minimum among directions $z$ and try to solve
\begin{equation}
\label{ddd}
\inf_{z\in\mathbb{S}^{d-1}} \left\{ \frac12 u(x+rz) + \frac12 u(x-rz) \right \} - u(x) = 0
\end{equation}	
for $x$ a vertex in $D$ and then take $r \to 0$ as the number of points increases. 
	Making such choices at a vertex $x$ of the graph is delicate: one needs sufficiently many points in $B_r(x)$ so that, in a large number of directions, there exist points close to opposite points on $\partial B_r(x)$. 
	One of the main contributions of this paper is to obtain a precise control on how fast $r$ must decay as the number of points increases, in order for this situation to occur asymptotically in the whole graph almost surely.
	Once such a regime is achieved, we consider a function $u$ defined on the graph that solves the discrete analogue of \eqref{ddd} at the vertices inside $D$, while prescribing $u(x)=f(x)$ for $x\notin D$. The resulting equation governs our discrete approximation of the convex envelope and admits an interpretation in terms of a one-player game (a controller), which we describe and analyze in this paper. For related game-theoretic formulations and results, see \cite{BlancRossi,Blanc-Rossi,Lewicka,MPR,PSSW}.

	As precedents in the literature we quote \cite{Arroyo,Bun,Cal1,Cal2,Cal3,Cal4,Cal5}. 
There is a substantial interplay between partial differential equations (PDEs) and graph-based semi-supervised learning: certain PDEs arise as continuum limits of graph-based learning procedures as the amount of data grows.
In previous references the $p-$Laplacian semi-supervised learning was analyzed 
using the stochastic Tug-of-war game interpretation of the $p-$Laplacian in a graph
(the $p-$Laplacian approximation with $p<d$ turns out to be well-posed in the limit of finite labeled data and infinite unlabeled data).

	Now let us comment very briefly on the main ideas used in the proofs. 
First we need some estimates to ensure that the random points with uniform distribution give rise to a proximity graph with nice properties. 
These estimates are based on probabilistic arguments. 
To pass to the limit and recover the convex envelope of the boundary datum we mainly use PDE techniques and the general theory of viscosity solutions \cite{CIL,Koike} in the spirit of \cite{BarSou}.
	A technically delicate point concerns the estimates near the boundary, which are needed to ensure that the boundary datum is taken with continuity. 
	In \cite{Arroyo} it is proved Holder regularity theory for solutions to quite general discrete equations or equivalently discrete stochastic processes on a random graph. However, the hypotheses used there are not well suited for our problem. The optimal asymptotic regularity for our approximations of the convex envelope is left open.

\subsection{Setting and statements of the results} 
\label{subsect-descrip.environment}
Next, let us describe precisely our general setting and state the main results in this paper.

Let $(X_n)_{n\in\N}$ be a sequence of independent uniform random variables on $[0,1]^d$, $d\geq 2$.
Call $\P$ for the product measure in $\Omega = ([0, 1]^d)^\N$ with marginals uniform on $[0, 1]^d$. 

Let $r>0$ and set $\chi_n=\{X_1,\ldots,X_{n}\}$.
Let $\G(\chi_n,r)$ be the graph whose vertices are $\chi_n$ and whose edges are  
\[
\Ec_{n,r} 
=
\Big\{ (x,y)\in \chi_n ^2 \colon x\neq y \text{ and } |x-y|<r \Big\},
\]
where $|\cdot |$ is the Euclidean norm.
When $(x, y)\in \Ec_{n,r}$ we write $x\sim y$ and denote the neighborhood of $x$ as $\Nc_x =\{y\in \chi_n \colon y\sim x\}$. 

We are interested in the case where $n\to \infty$, so, we need to scale the radius to have non-trivial behaviour.
Let $(r_n)_{n\in\N}$ be a sequence of positive numbers decreasing towards $0$ .
Call $\G_n$ for the graph $\G(\chi_n,r_n)$.
The graph $\G_n$ is a simple (but very interesting) model of a random geometric graph.
Depending on the relation between $n$ and $r_n$ the graph presents different behaviors. 
We will work in the \emph{superconnectivity} regime.
We expand about this concept in Section \ref{section.graph}.
A classical reference for random geometric graphs is \cite{penrose}.

The set of vertices $\chi_{n+1}$ is coupled with $\chi_n$ in such way that $\chi_{n}\subset \chi_{n+1}$, that is, when $n$ increases by one, we add a new point.
The situation for the edges is different, since the threshold $r_n$ also changes with $n$.
Hence, an edge of $\Ec_n$ might not be in $\Ec_{n+1}$.

Let $\C_n$ denote the largest connected component of $\G_n$.
Given a vertex $x$, we will consider its neighbors whose distance from $x$ is close to the connection radius $r_n$.
For a sequence $(\delta_n)_{n\in \N}\subset(0,1)$ with $\delta_n \downarrow 0$ and $x\in\chi_n$, we denote by $\Nc_x^{\delta_n}$ the set of the neighbors of $x$ lying in the annulus with radii $(1-\delta_n)r_n$ and $r_n$, namely 
\begin{equation}
\label{vecinos.borde}
\Nc_x^{\delta_n}
\coloneqq
\{
y \in \Nc_x \colon |x-y|>(1-\delta_n)r_n
\}
.
\end{equation}
We will choose $n, r_n, \delta_n$ so that, $\P$-almost surely, $\Nc_x^{\delta_n} \neq \emptyset$ for every $x\in \C_n$ (see Section~\ref{section.graph}).
For the moment, we will keep the notation $\delta_n$; later on, however, we will choose $\delta_n$ explicitly as a function of the graph parameters $n$ and $r_n$.
For convenience, we will occasionally refer to $\Nc_x^{\delta_n}$ as an annulus, even though it is, strictly speaking, the discrete set of neighbors lying in the annular region with radii $(1-\delta_n)r_n$ and $r_n$.

Now, assume that $\Nc_x^{\delta_n} \neq \emptyset$ for all $x\in \C_n$. 
In general, for any $y \in \Nc_x^{\delta_n}$, its reflected point with respect to $x$ is not a sampled point, therefore we need to define an approximate reflected neighbor.
Consider the map $A_x\colon \Nc_x^{\delta_n} \to \Nc_x^{\delta_n}$, defined as $A_x(y) = y_x$, where $y_x$ is given by
\begin{equation}\label{quasi_sym}
y_x 
\coloneqq
\text{argmin}\{ |z+y-2x|^2 \colon z \in \Nc_x^{\delta_n}\}
,
\end{equation}
that is, $y_x$ is the closest point to the reflection of $y$ with respect to $x$.
When the argmin is not unique (which is an event with zero probability), we choose the first argmin in lexicografic order.

Fix $D\subset [0,1]^d$ a strictly convex set.
Let $f\colon D^c\cap[0,1] \to \R$ be a bounded function, and define 
\[
\Domain_n \coloneqq \C_n\cap D,
\qquad 
\B_n \coloneqq \C_n \cap (D^c\cap [0,1]),
\]
where $\C_n$ is the largest connected component of $\G_n$.

Now, we want to describe a game in the random environment given by $\C_n$, that is, where the possible game positions are vertices from a fixed realization of the largest connected component of the random geometric graph $\G_n$.
Then, the space of game sequences and the strategies are defined with respect to the sampled realization.
Thus, if we prove a certain result, for example ``the value of the game verifies some equation", it will hold almost surely with respect to $\P$.
Such a point of view, where the random environment is fixed, is known as the \emph{quenched} setting in the probability and statistical mechanics literature

The parameters of the game are the number of points, $n$, the sequences $(r_n)_{n\in \N},$ and $(\delta_n)_{n\in \N},$ and the function $f\colon D^c\cap[0,1] \to \R$.
The game has only one player, named $J$. 
Informally, the game works as follows: at initial time, a token is placed at some vertex $x_0\in \Domain_n$, and $J$ picks a vertex $y$ among the neighbors of $x$ in the annulus $\Nc_{x_0}^{\delta_n}$, according to some strategy $S$.
The new game position is $x_1\in \Nc_{x_0}^{\delta_n}$ chosen between $y$ and $y_x$ with equal probability, $1/2$.
At each turn, $J$ repeats this procedure, that is, chooses a neighbor in the annulus from the current position (with some strategy), and the new game state is given with probability 1/2 by this picked point or its approximate symmetric.
Let $\tau$ be the first time at which the game position hits $\B_n$.
The game ends at this final position, $x_\tau\in\B_n$, and $J$ pays $f (x_\tau)$. 
The goal of $J$ is to minimize the expected payoff (we refer to $f$ as the \emph{payoff function}). 

The space of all game sequences starting at $x_0$ is
\[
H_n^{\infty} = \{x_0\} \times \G_n \times \G_n \times \cdots
\]
endowed with the product topology. 
Let $\Fc_n^k$ be the $\sigma$-algebra on $H_n^{\infty}$ generated by cylinders of the form $\{x_0\}\times A_1\times \cdots \times A_k\times \G_n \times \cdots$ where each $A_i\subset \G_n$ is a Borel set.
The family $(\Fc_n^k)_{k\geq 0}$ is a filtration, and let $\Fc_n^{\infty} = \sigma(\cup_{k\geq 0} \Fc_n^k)$ be the smallest $\sigma$-algebra that contains the filtration.

A strategy for $J$ is an adapted sequence of maps $S = (S_{n}^k)_{k\geq 0}$ related to the filtration $(\Fc_n^k)_{k\geq 0}$, such that the next game position is a function of the partial game history, that is, 
\[
S_{n}^k(x_0,\dots,x_k) 
=
x_{k+1} 
\]
where $x_{k+1} \in \Nc_{x_k}^{\delta_n}$.
Then, given the partial history $(x_0,\dots,x_k),$ the next game position is sampled with the probability measure
\begin{equation}
\label{prob-transicion}
\frac{1}{2} \delta_{S_{n}^k(x_0,\dots,x_k)}
+
\frac{1}{2} \delta_{A_{x_k}(S_{n}^k(x_0,\dots,x_k))}.
\end{equation}

Let us fix the starting point $x_0\in\Domain_n$ and the strategy $S$.
Then, by the Kolmogorov's extension theorem, there exists a unique probability measure $P_{S,n}^{x_0}$ on $H_n^{\infty}$ related to the $\sigma$-algebra $\Fc_n^{\infty}$ such that the conditional probability measure with respect to $\Fc_n^k$ is given by \eqref{prob-transicion}. 
We write $E_{S,n }^{x_0}$ for the expectation with respect to $P_{S,n}^{x_0}$.
See \cite[Ch. 7]{Blanc-Rossi} and \cite[Ch. 3]{Lewicka} for more details about the construction of this probability.

The hitting time of the set $\B_n$, that is, 
\begin{equation}
\tau \coloneqq \inf \{k\geq 0  \colon x_k\in \B_n \}
\end{equation}
is an stopping time for the filtration $(\Fc_n^k)_{k\geq 0}$.
Assume that $P_{S,n}^{x_0}(\tau < \infty)=1$ for almost every realization with respect to $\P$ (we will prove this in the next section).
The expected payoff for the player $J$ when the game starts at $x_0$ is $E_{S,n}^{x_0}[ f (x_{\tau})]$.
The game value $u_{n}\colon \Domain_n \to \R$ is defined by
\begin{equation}
\label{game.value}
u_{n}(x_0)
= 
\inf_{S} E_{S,n}^{x_0}[ f (x_{\tau})]
.
\end{equation}
Note that $u_{n}$ depends on $n$, $r_n$, $\delta_n$ (the parameters of the model) and the realization of the graph $\G_n$.
When there is no confusion we summarize these dependences only with the sub-index $n$.

Throughout this article we will assume a specific relationship between the parameters defining the graph and those used to define the game.
As already mentioned, we work in the superconnectivity regime, that is, we assume 
\begin{equation}
\label{condicion.superconnectivity}
nr_n^d/\log n \xrightarrow[n\to\infty]{} +\infty.
\end{equation}
However, in order to establish our results we will need a stronger condition than~\eqref{condicion.superconnectivity}.
To keep the choice of parameters transparent, we will impose
\begin{equation}
\label{condicion.parametros}
\frac{\sqrt{n} r_n^{2d}}{\log n}
 \xrightarrow[n\to\infty]{} 
 +\infty
,
\end{equation}
and we will assume $\delta_n=r_n/\sqrt{\log n}$.
Strictly speaking, one would require the condition $\delta_n=o(r_n)$ in order to prove the convergence of the value function to the limiting equation, together with an additional assumption discussed in Section~\ref{section.graph} (see the remark preceding Lemma~\ref{lemma-todas-las-dir}).

We believe that condition \eqref{condicion.parametros} is not optimal, and that the main result of this paper should remain valid throughout the entire superconnectivity regime.

We want to study the asymptotic behavior of the game value function as $n \to +\infty$.
First, we show that the game value satisfies an equation, called {\it Dynamic Programming Principle} (DPP)
in the game theory literature, and that this equation has an unique solution.

\begin{theorem}
\label{game.value.holds.DPP}
Let $(r_n)_{n\in\mathbb{N}}$ and $(\delta_n)_{n\in\mathbb{N}}$ be sequences satisfying the condition \eqref{condicion.parametros}.
Then, $\P$-almost surely, for $n$ sufficiently large, the game value $u_n\colon \C_n\to \R$ defined in \eqref{game.value} is the unique solution of the following Dynamic Programming Principle (DPP)
\begin{equation}
\label{DPP.intro}
\begin{cases}
\displaystyle u(x)=\min\limits_{y\in \Nc_x^{\delta_n}} 
\left(
\frac{1}{2} 
u(y) 
+
\frac{1}{2}
u(y_x) 
\right)
& x\in \Domain_n \\
u(x)=f(x)
& x\in \B_n
\end{cases} 
.
\end{equation}
\end{theorem}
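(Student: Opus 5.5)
The plan is to work in the quenched setting. I fix a realization in the $\P$-almost sure event, provided by Section~\ref{section.graph} under \eqref{condicion.parametros}, on which two properties hold for $n$ large. First, $\Nc_x^{\delta_n}\neq\emptyset$ for every $x\in\C_n$. Second, the approximate reflections are good in the sense that
\[
\epsilon_n:=\max_{x\in\Domain_n}\max_{y\in\Nc_x^{\delta_n}}|y+y_x-2x| = o(r_n^2).
\]
If the graph section only gives $\epsilon_n=o(r_n)$, I would instead aim for the weaker requirement $\epsilon_n\le c\,r_n^2$ with $c$ small depending on $\mathrm{diam}[0,1]^d$, which is what the step below actually uses. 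On this event $\C_n$ is a finite graph, so the DPP \eqref{DPP.intro} is a finite system of equations, and the minimum over $\Nc_x^{\delta_n}$ is attained.

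\emph{Step 1 (uniform termination).} I take $\phi(x)=|x-c_0|^2$ for a fixed point $c_0$. Write $m=(y+y_x)/2$ and $R=\mathrm{diam}[0,1]^d$. For $x\in\Domain_n$ and any $y\in\Nc_x^{\delta_n}$, the parallelogram identity gives
\[
\tfrac12\phi(y)+\tfrac12\phi(y_x)=\phi(m)+\tfrac14|y-y_x|^2 .
\]
Since $|y-x|>(1-\delta_n)r_n$ and $|m-x|\le\epsilon_n/2$, we have $|y-y_x|\ge 2|y-x|-\epsilon_n$. We also have $\phi(m)\ge\phi(x)-R\,\epsilon_n$. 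Combining these,
\[
\tfrac12\phi(y)+\tfrac12\phi(y_x)\ge \phi(x)+\tfrac12 r_n^2,
\]
uniformly in $y$, for $n$ large. Hence $\phi(x_{k\wedge\tau})-\tfrac12 r_n^2(k\wedge\tau)$ is a submartingale under $P^{x_0}_{S,n}$ for every strategy $S$. Because $\phi\le R^2$, optional stopping gives $E^{x_0}_{S,n}[\tau]\le 2R^2/r_n^2$. In particular $\tau<\infty$ almost surely, uniformly over strategies. This justifies the standing assumption in the definition \eqref{game.value}.

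\emph{Step 2 (existence of a solution of the DPP).} Let $T$ be the operator given by the right-hand side of \eqref{DPP.intro} on $\Domain_n$, with $Tu=f$ on $\B_n$. It is monotone and preserves the constants $\inf f$ and $\sup f$. I iterate $T$ starting from $u^0\equiv\inf f$, with $u^0=f$ on $\B_n$. The iterates are monotone nondecreasing and bounded by $\sup f$ on a finite set, so they converge. The limit is a fixed point because $T$ is continuous on $\R^{\C_n}$, being built from finitely many mins and averages.

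\emph{Step 3 (any solution equals the game value).} Let $u$ be any solution. For an arbitrary strategy $S$, the DPP gives
\[
E[u(x_{k+1})\mid\Fc^k_n]=\tfrac12u(S^k)+\tfrac12u(A_{x_k}S^k)\ge u(x_k)
\]
on $\{k<\tau\}$, so $u(x_{k\wedge\tau})$ is a bounded submartingale. By Step 1 and optional stopping, $u(x_0)\le E^{x_0}_{S,n}[f(x_\tau)]$, and taking the infimum gives $u\le u_n$. Conversely, let $S^*$ be the Markov strategy that picks a minimizer in the DPP at the current position. Then $u(x_{k\wedge\tau})$ is a martingale, which yields $u(x_0)=E^{x_0}_{S^*,n}[f(x_\tau)]\ge u_n(x_0)$. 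Thus $u=u_n$, which gives both that $u_n$ satisfies \eqref{DPP.intro} and uniqueness.

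The main obstacle is Step 1. It needs quantitative control of the reflection error $|y+y_x-2x|$, of order $o(r_n^2)$ uniformly over all vertices, and this must come from the graph estimates and from the strength of \eqref{condicion.parametros}. If only $o(r_n)$ control is available, I would try a steeper convex potential, for instance $e^{\lambda\langle x,e\rangle}$ or $|x-c_0|^{p}$ with large exponent, to absorb the first-order error term.
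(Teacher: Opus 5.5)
Your proof is correct, but it is organized differently from the paper's. The paper's route has four steps.
\begin{itemize}
\item Termination (Lemma~\ref{lema-juego-termina}) by a half-space argument: at every step one of the two candidates $y$, $y_x$ is farther from a fixed point by at least $c\,r_n^2$.
\item A deterministic comparison principle for sub/supersolutions of the DPP (Lemma~\ref{lema-cp}). It is proved by propagating the maximum of $u-v$ along that same outward-moving chain until it reaches $\B_n$.
\item Existence by Perron's method.
\item Identification with the game value (Lemma~\ref{valor.juego.DPP}), with uniqueness taken from the comparison principle.
\end{itemize}

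You differ at three points:
\begin{itemize}
\item You replace the half-space argument with the Lyapunov function $|x-c_0|^2$ and the parallelogram identity. This gives a submartingale with drift and the quantitative bound $E^{x_0}_{S,n}[\tau]\le 2R^2/r_n^2$, uniformly in $S$.
\item You get existence by monotone value iteration.
\item You skip the comparison principle and obtain uniqueness from the verification argument that every solution equals $u_n$.
\end{itemize}

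Your Step~3 is also tighter than the paper's. The paper's converse direction treats $E^x_{S,n}[v_n(x_\tau)]$ as a function of the starting point alone and calls it a supersolution, which is delicate for history-dependent strategies. It also invokes optional stopping by asserting that $\tau$ is bounded. Your argument needs neither: $u(x_{k\wedge\tau})$ is a submartingale under an arbitrary strategy, and optional stopping applies to this bounded process with $\tau<\infty$ a.s.

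What the paper's route buys is a standalone comparison principle with ordered boundary data, which it reuses in the barrier argument of Lemma~\ref{LemaBorde}. The same sub/supermartingale reasoning in your Step~3 gives the one-sided comparisons against $u_n$ that Lemma~\ref{LemaBorde} actually needs, so nothing is lost.

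On your ``main obstacle'': the input you need is exactly what the paper uses in Lemma~\ref{lema-juego-termina}. Lemma~\ref{lemma-todas-las-dir} gives $|y_x-(2x-y)|\le C_d\, r_n(\alpha_n+\delta_n)$, which is $o(r_n^2)$ under the choices $\delta_n,\alpha_n=o(r_n)$. So your primary argument goes through as written.

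The fallback you sketch would not rescue a mere $o(r_n)$ bound. For $\phi=|x-c_0|^p$, the first-order loss $|\nabla\phi|\,\epsilon_n$ and the second-order gain $\lambda_{\min}(D^2\phi)\,r_n^2$ have ratio of order $|x-c_0|\,\epsilon_n/r_n^2$, independent of $p$. For $e^{\lambda\langle x,e\rangle}$, the Hessian is degenerate orthogonal to $e$, so moves in those directions produce no second-order gain. You should rely on the $o(r_n^2)$ bound, which is available.
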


As we have already mentioned, the convex envelope of a continuous datum $f:\partial D
\mapsto \mathbb{R}$ inside a bounded smooth strictly convex domain is characterized as the 
unique solution to the Dirichlet problem 
\begin{equation}
\label{DP.convex}
\begin{cases}
-\lambda_1[D^2 u](x) = 0 
& x\in D \\
u(x)=f(x)
& x\in \partial D
\end{cases} 
.
\end{equation}
This problem \eqref{DP.convex} has a unique solution in viscosity sense and a comparison principle holds, see \cite{OS,Ober}. 

The game value $u_n$ is defined on $\mathcal{C}_n$, but in order to state our main result we need to extend $u_n$ to the whole domain $D$.
To this end, define $T_n \colon D \to \chi_n$ by setting $T_n(x)=X_i$, where $X_i\in \chi_n$ is the closest point of $\chi_n$ to $x$. 
Ties are broken according to any fixed rule, for instance the lexicographic order. 
We then define $\tilde u_n\colon D \to \mathbb{R}$ by
\[
\tilde u_n(x) \coloneqq u_n\bigl(T_n(x)\bigr),
\]
which extends $u_n$ to the whole domain $D$.
For simplicity, we will also refer to $\tilde{u}_n$ as the game value, when no confusion may arise.

Let us now fix $x\in D$ and consider as a candidate for the limit the function
\[
u(x) = \lim_{n\to\infty} \tilde u_n(x).
\]

Our main result reads as follows.

\begin{theorem}
\label{game.value.convergence}
Let $(r_n)_{n\in\mathbb{N}}$ be a sequence satisfying condition \eqref{condicion.parametros} and let $f\colon D\to \R$ continuous.
Let $u\colon D\to \R$ be the unique solution (in viscosity sense) to the problem \eqref{DP.convex}.
Then, $\P$-almost surely the sequence of the game values $(\tilde{u}_n)_{n\in \N}$ converges uniformly to $u$ as $n \to \infty$.
\end{theorem}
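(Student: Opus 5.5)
The plan is the Barles--Souganidis scheme \cite{BarSou}: uniform boundedness, half-relaxed limits, viscosity inequalities for them, a boundary estimate, and finally comparison. Throughout we fix a realization in the full-measure event where the graph lemmas of Section~\ref{section.graph} hold for $n$ large. On this event $\B_n$ is hit almost surely, and Theorem~\ref{game.value.holds.DPP} applies. We also use that, for every $x\in\C_n$ and every unit vector $z$, there are $y\in\Nc_x^{\delta_n}$ with $|y-x-r_nz|=o(r_n^2)$, and that $|y_x-(2x-y)|=o(r_n^2)$ uniformly. This precision is what condition \eqref{condicion.parametros} and the choice $\delta_n=r_n/\sqrt{\log n}$ are designed to give, via Lemma~\ref{lemma-todas-las-dir}.

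Boundedness is immediate, since $\min f\le u_n\le\max f$ by the DPP. Define
\[
\overline u(x)=\limsup_{n\to\infty,\,\chi_n\ni y\to x}u_n(y),\qquad \underline u(x)=\liminf_{n\to\infty,\,\chi_n\ni y\to x}u_n(y).
\]

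For the interior inequalities, let $\phi$ be smooth and let $\overline u-\phi$ have a strict maximum at $x_0\in D$. Choose points $x_n\in\Domain_n$ with $x_n\to x_0$ that almost maximize $u_n-\phi$, with error $o(r_n^2)$. The DPP at $x_n$, with any admissible $y$, gives
\[
u_n(x_n)\le\tfrac12u_n(y)+\tfrac12u_n(y_x).
\]
Replacing $u_n$ by $\phi$ plus a constant yields $0\le\tfrac12\phi(y)+\tfrac12\phi(y_x)-\phi(x_n)+o(r_n^2)$. Take $y$ approximating $x_n+r_nz$; since $y_x\approx x_n-r_nz$ up to $o(r_n^2)$, a Taylor expansion and division by $r_n^2$ give $\langle D^2\phi(x_0)z,z\rangle\ge0$ for every $z$. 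Hence $\lambda_1(D^2\phi(x_0))\ge0$, so $\overline u$ is a subsolution.

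For the supersolution, let $\underline u-\phi$ have a minimum at $x_0$ and take the minimizing $y$ in the DPP at near-minimizers $x_n$. Writing $y-x_n=\rho_n z_n$ with $\rho_n\in((1-\delta_n)r_n,r_n)$ and passing to a subsequence $z_n\to z$, the same expansion gives $\langle D^2\phi(x_0)z,z\rangle\le0$. Hence $\lambda_1\le0$. Here it matters that $\delta_n\to0$ and that the reflection error is $o(r_n^2)$; I would double-check that the near-maximizer error $o(r_n^2)$ can be arranged, using the standard $\varepsilon_n=o(r_n^2)$ choice.

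For the boundary, I would show $\overline u\le f\le\underline u$ on $\partial D$, with $f$ continuous and extended continuously outside $D$. The upper bound uses a game strategy. Near $x_0\in\partial D$, strict convexity of $D$ gives a supporting hyperplane with normal $\nu$. The player pushes in direction $\nu$, choosing $y\approx x+r_n\nu$. The resulting process has a nearly symmetric displacement across the hyperplane and exits $D$ close to $x_0$ with high probability; one can use the function $\langle x-x_0,\nu\rangle$ together with an optional-stopping estimate on the martingale of the tangential components. Alternatively, one compares $u_n$ with the affine barrier $f(x_0)+\varepsilon+L\langle x-x_0,-\nu\rangle$, and the DPP shows the game value is controlled. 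For the lower bound, every convex-type function $\ell$ with $\ell\le f$ outside $D$ is a discrete subsolution: an affine $\ell$ satisfies $\tfrac12\ell(y)+\tfrac12\ell(y_x)-\ell(x)=O(|y+y_x-2x|)$, which is small. Then $u_n\ge\ell-o(1)$, and since $f$ is continuous and $D$ is strictly convex, affine minorants touch $f$ at boundary points up to $\varepsilon$. Making this uniform is, I expect, the main obstacle, because the errors from the approximate reflection accumulate over the roughly $r_n^{-2}$ steps of the game. The fix is to use $\ell-\kappa|x|^2$-type corrections, or to use that the reflection error is $o(r_n^2)$ so that the accumulated error is $o(1)$ in expectation; this requires a bound $E[\tau]\lesssim r_n^{-2}$, obtained from a quadratic Lyapunov function in the pushing direction.

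Finally, comparison for \eqref{DP.convex} (\cite{OS,Ober}) gives $\overline u\le\underline u$. Since the reverse inequality holds trivially, $\overline u=\underline u=u$, and equality of the half-relaxed limits yields local uniform convergence. Together with the boundary estimates this gives uniform convergence on $\overline D$, and transferring it to $\tilde u_n=u_n\circ T_n$ uses $|T_n(x)-x|\to0$ uniformly.
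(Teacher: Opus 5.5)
Your plan is essentially the paper's: consistency from a Taylor expansion using neighbours in all directions and the $o(r_n^2)$ reflection error (Lemma~\ref{JJJ}), boundary barriers built from the strict convexity of $D$ (Lemma~\ref{LemaBorde}), and then comparison for \eqref{DP.convex}; your half-relaxed limits are actually the cleaner device, since the paper's pointwise $\liminf/\limsup$ of $\tilde u_n$ neither justifies localizing the minimizers $x_n$ at $x_0$ nor by itself gives the uniform convergence claimed. One fix: the quadratic correction in your lower barrier must be convex, i.e.\ $\ell+\kappa|x-x_0|^2$ minus a constant as in the paper's $v$ (a concave correction is what the upper barrier needs, as in the paper's $w$), and the bound $E[\tau]\lesssim r_n^{-2}$ against an arbitrary minimizing strategy should come from $|x-p|^2$, whose conditional increment is at least $c\,r_n^2$ for every choice of $y$, rather than from a quadratic in a single pushing direction.
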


The limit of the sequence of game values is independent of the game parameters, except through the payoff function $f$.

\section{Some properties of \texorpdfstring{$\G_n$}{Gn}}
\label{section.graph}

The classical model for random graphs is the Erd\H{o}s--Rényi model, in which any two vertices are connected with probability $p$, independently of all other pairs. 
Hence, the presence of an edge $x\sim z$ is independent of whether $x\sim y$ or $y\sim z$. 
This is not the case for the random geometric graph $\G_n$ introduced in the previous section.
In the geometric setting, when $x\sim y$ and $y\sim z$ (that is, $x$ is close to $y$ and $y$ is close to $z$) then $x$ will also be relatively close to $z$, which provides information about the probability of an edge between $x$ and $z$.

The geometry of $\G_n$ exhibits different behaviors depending on its parameters (recall that the game description involves $\delta_n$, while the graph $\G_n$ itself depends only on $n$ and $r_n$). 
In general, $\G_n$ is not necessarily connected; however, depending on the sequence $(r_n)_{n\in\N}$, it can be shown that the graph becomes connected with probability one as $n$ goes to infinity. 
It is straightforward to see that the expected degree of a vertex is of order $nr_n^d$. Depending on the asymptotic behavior of this quantity, different limit regimes arise for the size of the largest connected component $\C_n$  and for the connectivity properties of $\G_n$. 
We refer to the case $nr_n^d\to\infty$ as the \emph{dense} regime (that is $\C_n$ is dense in $[0,1]^d$). 
There are several special cases within the dense regime, depending on the expected number of isolated vertices. 
We are particularly interested in the \emph{superconnectivity} regime, corresponding to $nr_n^d/\log n \to \infty$, in which the graph is asymptotically connected with probability one. 
In what follows, we work within the \emph{superconnectivity} regime, while imposing additional conditions on $r_n$ and the other parameters of the game. 
For a comprehensive discussion of these regimes and their connectivity properties, see, for instance, \cite{penrose}.

Recall that $\Domain_n = \C_n \cap D$, where $D$ is a strictly convex open subset of $[0,1]^d$. 
Without loss of generality, we may assume that the distance between $D$ and the complement of $[0,1]^d$ is greater than $r_n$ (which holds for $n$ sufficiently large).

First, we need to prove that, almost surely, $\Nc_{x}^{\delta_n} \neq \emptyset$ for all $x\in\Domain_n$ when $n$ is large, since it is necessary to have an approximate reflected point for every vertex in the annulus. 
Moreover, the proof of Theorem~\ref{game.value.convergence} relies on the fact that, for a regular function, it is possible to estimate the second derivative in the direction of the gradient by means of a special average over a neighborhood. 
This requires that, for every $x \in \Domain_n$, there exist points in $\Nc_{x}^{\delta_n}$ in all directions. 
This section will show that the game parameters $n$, $r_n$, and $\delta_n$ can be chosen so that the two conditions above are satisfied. 
To this end, we will use a version of Talagrand's concentration inequality for empirical processes, refined by Bousquet~\cite{Bousquet2002}.

Let us clarify what we mean by saying that there are points in ``all directions" of the annulus. 
Given $r>0$ and $0<\delta<1$, let $S(r,\delta)$ denote the annulus with inner and outer radii $(1-\delta)r$ and $r$, respectively, centered at the origin:
\begin{equation}
S(r,\delta)
\coloneqq
\{z\in \R^d : r(1-\delta)<|z|<r\}.
\end{equation}
Let $\alpha>0$.  
We define the positive cone $K(\alpha)$ by
\begin{equation}
K(\alpha) 
\coloneqq
\{z = (z_1,\dots,z_d) \in \R^d : \sqrt{z_1^2 + \dots + z_{d-1}^2} \leq \alpha z_d\}.
\end{equation}
The set
\begin{equation}
S(r,\delta,\alpha) \coloneqq S(r,\delta) \cap K(\alpha)
\end{equation}
consists of those points of the annulus that lie close to the direction of $e_d$ when $\alpha$ is small (say $\alpha \in (0,1)$).  

When referring to points in all directions, the aim is to ensure that, for sufficiently small parameters $r_n$, $\delta_n$, and $\alpha_n$, and for sufficiently large $n$, the intersection of $\chi_n$ with the family of sets obtained by translating and rotating $S(r_n,\delta_n,\alpha_n)$ is nonempty with high probability.

More precisely, let $\mathcal{A}_n$ denote the collection of sets obtained by applying rotations and $\mathbb{Q}^d$-translations to $S(r_n,\delta_n,\alpha_n)$, where the parameters $r_n$, $\delta_n$, and $\alpha_n$ are the $n-$th elements of the sequences $(r_n)_{n\in\N}$, $(\delta_n)_{n\in\N}$, and $(\alpha_n)_{n\in\N}$, respectively, in such a way that the resulting sets remain contained in $[0,1]^d$.  
Let $\mathcal{A}$ be the class of sets given by the union of the classes $\mathcal{A}_n$, with $n\in\N$.  
Note that the class $\mathcal{A}$ is countable.  
Moreover, $\mathcal{A}$ has finite VC dimension (Vapnik--Chervonenkis dimension, see for instance \cite{BLM-concentration}).

For $S \in \mathcal{A}$, denote by $\nu(S)$ its Lebesgue measure. 
Observe that all elements within $\mathcal{A}_n$ have the same volume. 
It is not difficult to compute that, if  $\delta_n, \alpha_n \to 0$, we have
\[
\nu(S) \asymp c\, r_n^d \delta_n \alpha_n^{d-1} \qquad \text{ for }
S \in \mathcal{A}_n,
\]
where $c>0$ depends only on the dimension $d$. 
For simplicity, we denote $\gamma_n = c r_n^d \delta_n \alpha_n^{d-1}$.
%

In what follows, we assume that the parameters satisfy
\begin{equation}
\label{condicion.lemma1}
\frac{c\sqrt{n} r_n^d \delta_n \alpha_n^{d-1}}{\log n} 
= \frac{\sqrt{n} \gamma_n}{\log n}\geq 2.
\end{equation}

We define our random variable of interest as
\begin{equation}
\label{defZ}
Z = \sup_{S\in \mathcal{A}} \left|\sum_{i=1}^n  \big(\mathds{1}\{X_i\in S\} - \nu(S)\big) \right|
= 
\sup_{S\in \mathcal{A}} \left|\#(\chi_n \cap S) - n\nu(S)\right|.
\end{equation}
Since the class $\mathcal{A}$ is countable, there are no measurability issues.

To bound the deviations of $Z$ from its expectation $\E Z$, we use Bousquet's inequality \cite[Thm.~2.3]{Bousquet2002}, which yields, for all $t \ge 0$,
\begin{equation}
\label{Bousquet}
\P\!\left(Z > \E Z + \sqrt{t}\,\sqrt{2(n\sigma^2 + 2\E Z)} + \frac{t}{3}\right) 
\le 
e^{-t},
\end{equation}
where $\sigma^2$ is any positive real number surch that 
\[
\sigma^2 \ge \sup_{S\in \mathcal{A}} \E\!\left[(\mathds{1}\{X_1\in S\} - \nu(S))^2\right].
\]
Note that we can choose $\sigma^2 \le 1/4$.

It is possible to bound $\E Z$ by $C\sqrt{n}$, with $C$ a constant independent of $n$, using empirical processes techniques. 
To do so, we follow the strategy outlined in \cite[Section~13.3]{BLM-concentration}, which we briefly describe below. 

The first step is to apply the symmetrization lemma (see \cite[Lem.~11.14]{BLM-concentration}), which in our setting yields
\begin{equation}
\label{lema-simetrizacion}
\E Z \le 2\, \E\!\left( \E_{\epsilon}\left( \sup_{S\in\mathcal{A}}
\left|\sum_{i=1}^n \epsilon_i \mathds{1}\{X_i\in S\}\right|\right) 
\,\Big|\, X_1,\dots,X_n \right),
\end{equation}
where $(\epsilon_i)_{i\in\N}$ is an i.i.d. sequence of Rademacher random variables taking values in $\{-1,1\}$ with probabilty $1/2$, independent of $(X_i)_{i\in\N}$. 
The inner expectation in \eqref{lema-simetrizacion} is called the conditional Rademacher average.

This conditional Rademacher average, suitably normalized by $1/\sqrt{n}$, is sub-Gaussian with respect to a pseudo-metric associated with the empirical measure of $\chi_n$. 
This allows one to apply a version of Dudley's inequality \cite[Lem.~13.5]{BLM-concentration} to bound the conditional expectation using the metric entropy, which is itself bounded because the family of indicator functions we are working with, $\mathds{1}\{X_i\in S\}$ for $S\in\mathcal{A}$, is indexed by a countable class of finite VC dimension (being constructed from balls and cones).
Combining these results yields the bound $\E Z \le C\sqrt{n}$ (see \cite[Thm.~13.5]{BLM-concentration}).

Assuming the above bounds on $\sigma^2$ and $\E Z$, Bousquet's inequality \eqref{Bousquet} yields, for all $t \ge 0$,
\begin{align}
& \P\left(Z > C\sqrt{n} + \sqrt{t}\,\sqrt{\frac{n}{2} + 4C\sqrt{n}} + \frac{t}{3} \right)\\
&\le 
\P\left(Z > \E Z + \sqrt{t}\,\sqrt{2(n\sigma^2 + 2\E Z)} + \frac{t}{3}\right) \label{Bousquet2} \\
&\le 
e^{-t}. 
\end{align}
For each $n \ge 2$, we apply the above inequality with $t = n\gamma_n^2 / \log n$, which diverges to $\infty$ and makes the probabilities in \eqref{Bousquet2} summable, in view of \eqref{condicion.lemma1}. 
Hence, by the Borel–Cantelli lemma, there exists $n_0 = n_0(\omega)$, depending on the realization $\omega$, such that for all $n \ge n_0$ we have
\begin{equation}
\begin{array}{rl}
Z &
= 
\displaystyle \sup_{S\in \mathcal{A}} \left|\#(\chi_n \cap S) - n\nu(S)\right| \\
&
\displaystyle 
\le 
C\sqrt{n} + \frac{n\gamma_n\sqrt{1+4C}}{\sqrt{\log n}} + \frac{n\gamma_n^2}{3\log n} \\
&
\displaystyle 
\le 
\frac{C' n\gamma_n}{\sqrt{\log n}}.
\end{array}
\end{equation}
Therefore, for all $n \ge n_0$ and $S \in \mathcal{A}$, we obtain
\begin{equation}
\label{aux_bousquet}
\#(\chi_n \cap S)
\ge 
n\nu(S) - \frac{C' n\gamma_n}{\sqrt{\log n}}.
\end{equation}
However, the right-hand side of \eqref{aux_bousquet} may be negative for certain values of $n$, although it is asymptotically positive (recall that $S$ is fixed). 
For instance, this can occur when $S \in \mathcal{A}_k$ with $k \gg n \ge n_0$. 
Nevertheless, we are interested in ensuring that $\#(\chi_n \cap S) > 0$ for all $S \in \mathcal{A}_n$ and all $n \ge n_0$. 
Thus, it suffices to require that
\begin{equation}
n\nu(S) = n\gamma_n \ge \frac{C' n\gamma_n}{\sqrt{\log n}},
\end{equation}
which holds for $n$ sufficiently large.

This shows that, when condition \eqref{condicion.lemma1} holds, for a fixed realization of the environment, we have that $\Nc_x^{\delta_n} \neq \emptyset$ for $n$ large enough. 
Moreover, the proof shows that $\P$-almost surely there exists $n_0$, depending on the realization, such that for all $n \ge n_0$ there are points in each set $q + S(r_n,\delta_n,\alpha_n) \in \mathcal{A}_n$ with $q \in \mathbb{Q}^d$; that is, each vertex $x \in \C_n$ has neighbors in all directions of $\Nc_x^{\delta_n}$.

We summarize the result of this section in the following lemma.

\begin{lemma}
\label{lemma-todas-las-dir}
Assume condition~\eqref{condicion.lemma1}.
Then $\P-$almost surely there exists $n_0$, depending on the realization, such that for all $n \ge n_0$ there are points in each set of $\mathcal{A}_n$. 
Hence, $\Nc_x^{\delta_n}$ contains points in ``all directions" for every $x \in \C_n$, for $n\ge n_0$. 
In particular, $\Nc_x^{\delta_n} \neq \emptyset$ for all $x \in \C_n$.
\end{lemma}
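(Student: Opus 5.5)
The plan is to turn the uniform deviation bound on $Z$ into a lower bound on the counts $\#(\chi_n\cap S)$ for every $S\in\mathcal{A}_n$ at once, and then pass from the countable family $\mathcal{A}_n$ to sectors centred at arbitrary vertices $x\in\C_n$.

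\emph{Step 1: concentration.} We take as given the bound $\E Z\le C\sqrt n$, obtained via symmetrization, Dudley's inequality and the finite VC dimension of $\mathcal{A}$. We also use $\sigma^2\le 1/4$. We then apply Bousquet's inequality \eqref{Bousquet} with $t_n=n\gamma_n^2/\log n$. Condition \eqref{condicion.lemma1} gives $\sqrt n\,\gamma_n\ge 2\log n$, hence $t_n\ge 4\log n$. So $e^{-t_n}\le n^{-4}$, which is summable, and Borel--Cantelli yields an $n_0(\omega)$ such that for $n\ge n_0$
\begin{equation}
\sup_{S\in\mathcal{A}}\bigl|\#(\chi_n\cap S)-n\nu(S)\bigr|\le C\sqrt n+\sqrt{t_n}\sqrt{n/2+4C\sqrt n}+t_n/3 .
\end{equation}
Each term is $O(n\gamma_n/\sqrt{\log n})$:
\begin{itemize}
\item $C\sqrt n\le C n\gamma_n/(2\log n)$ by \eqref{condicion.lemma1};
\item the middle term is at most $C''\sqrt{n}\cdot\sqrt n\gamma_n/\sqrt{\log n}$;
\item $t_n/3\le n\gamma_n/(3\sqrt{\log n})$ since $\gamma_n\le 1$.
\end{itemize}

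\emph{Step 2: positivity.} For $S\in\mathcal{A}_n$ we have $\nu(S)\asymp\gamma_n$, so
\begin{equation}
\#(\chi_n\cap S)\ge n\gamma_n\bigl(c_1-C'/\sqrt{\log n}\bigr)>0
\end{equation}
once $n$ is large. Enlarging $n_0$ if necessary, every set in $\mathcal{A}_n$ contains a sample point for all $n\ge n_0$, which is the first claim.

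\emph{Step 3: from rational centres to vertices.} This is the main obstacle, because vertices $x\in\chi_n$ are not rational, and the rotations must range over all directions. I would handle it by a slight shrinking. Apply Steps 1--2 to a slightly smaller family, namely sectors with parameters $(r_n(1-\delta_n/4),\delta_n/2,\alpha_n/2)$. These still have volume $\asymp\gamma_n$, so the same argument gives sample points in each of them.

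Now fix $x\in\C_n$ and a direction $z\in\mathbb{S}^{d-1}$. Pick $q\in\mathbb{Q}^d$ with $|q-x|<\eta$, where $\eta\ll r_n\delta_n\alpha_n$. The shrunken sector at $q$ pointing in direction $z$ is then contained in the original sector $x+S(r_n,\delta_n,\alpha_n)$ in direction $z$. This containment is a triangle-inequality check on both radii and on the angle. Hence the latter contains a point $y\in\chi_n$. Such a $y$ satisfies $(1-\delta_n)r_n<|y-x|<r_n$, so $y\in\Nc_x^{\delta_n}$, and its direction lies within angle $O(\alpha_n)$ of $z$.

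Since $z$ was arbitrary, $\Nc_x^{\delta_n}$ has points in all directions. In particular it is nonempty. The constraint that the sectors stay inside $[0,1]^d$ is harmless, because $\operatorname{dist}(D,[0,1]^d{}^c)>r_n$ for vertices relevant to the game.
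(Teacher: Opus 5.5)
Your proposal is correct and follows the paper's argument: Bousquet's inequality with $\E Z\le C\sqrt n$ and $t_n=n\gamma_n^2/\log n$, then Borel--Cantelli, then positivity of $n\nu(S)-C'n\gamma_n/\sqrt{\log n}$ for $S\in\mathcal{A}_n$. Your Step 3 shrinks the sectors so that a rational centre $q$ near $x$ gives a sector inside $x+S(r_n,\delta_n,\alpha_n)$; the same angular slack would also absorb a countable dense set of rotations if $\mathcal{A}$ is to be countable. This makes rigorous the passage from $\mathbb{Q}^d$-translates to actual vertices, which the paper only asserts. Your restriction to vertices with $B(x,r_n)\subset[0,1]^d$ matches the paper's standing assumption $\operatorname{dist}(D,([0,1]^d)^c)>r_n$.
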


Note that if 
\[
\sqrt{n} r_n^d / \sqrt{\log n} \xrightarrow[n\to\infty]{} +\infty,
\]
one can choose the sequences $(\delta_n)_{n\in\N}$ and $(\alpha_n)_{n\in\N}$ so that condition \eqref{condicion.lemma1} is satisfied. This shows that $(\alpha_n)_{n\in\N}$ can be chosen accordingly and is not an additional parameter of the game.
However, for the results in the next sections we will need to impose that $\delta_n =o(r_n)$ and $\alpha_n = o(r_n)$ which leads to a stronger condition on the parameters~\eqref{condicion.parametros}.

\section{Proof of Theorem~\ref{game.value.holds.DPP}}

We fix a realization of the random points $(X_n)_{n\in\N}$ within the set of realizations of probability one for which Lemma~\ref{lemma-todas-las-dir} holds.
We assume that the sequences $(r_n)_{n\in \N}$ and $(\delta_n)_{n\in \N}$ satisfy the condition~\eqref{condicion.parametros} and we fix $n\geq n_0$, where $n_0$ is given by Lemma~\ref{lemma-todas-las-dir}, in order to ensure that the game is always well defined and moreover, that there are points in all directions. We will also need the auxiliary sequence $(\alpha_n)_{n\in\N}$ to satisfy $\alpha_n=o(r_n)$.  

In this section, we will show that the game value $u_n$ verifies the DPP defined in \eqref{DPP.intro}.
First, we show that $\tau$, the hitting time of $\B_n$, is finite $P_{S,n}-$almost surely.
Then, we prove that a comparison principle holds, and by applying the Perron method we show that the DPP has a solution.
The uniqueness is a corollary of the comparison principle.
Finally, using the solution of the DPP we prove that the game value satisfies the DPP.

To begin our analysis of the game we first show that we exit $D$ in a finite
number of steps with $P_{S,n}-$probability one.

\begin{lemma}
\label{lema-juego-termina}
Let $\tau$ be the hitting time of $\B_n$.
Suppose that $n\geq n_0,$ where $n_0$ is given by Lemma \ref{lemma-todas-las-dir}.
Then $$P_{S,n}^{x_0}(\tau < \infty)=1,$$ for any initial position $x_0\in\Domain_n$ and strategy $S$.
\end{lemma}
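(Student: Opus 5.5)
We use a Lyapunov-function argument: exhibit a function of the position that increases by a fixed amount in expectation at every step, whatever the strategy, while staying bounded on $\Domain_n$. The natural choice is $V(x)=|x-x^*|^2$ for a fixed center $x^*$ (say the center of $[0,1]^d$), which is bounded by $d$ on $[0,1]^d$.

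\textbf{Drift of $V$.} Fix $n\ge n_0$, a strategy $S$ and a partial history ending at $x_k\in\Domain_n$. Let $y=S_n^k(x_0,\dots,x_k)\in\Nc_{x_k}^{\delta_n}$ and $y_x=A_{x_k}(y)$. Write $y=x_k+h$ and $y_x=x_k-h+e$, where $e:=y+y_x-2x_k$ is the reflection error. Then
\[
E\big[V(x_{k+1})\mid\Fc_n^k\big]-V(x_k)=\tfrac12|h|^2+\tfrac12|e-h|^2+\langle x_k-x^*,e\rangle .
\]
Since $y\in\Nc_{x_k}^{\delta_n}$ we have $|h|>(1-\delta_n)r_n$. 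The task is to show $|e|=o(r_n)$ uniformly in $x_k$ and $y$.

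\textbf{Controlling the reflection error.} Here we use Lemma~\ref{lemma-todas-las-dir}. The point $2x_k-y$ lies at distance $|h|\in((1-\delta_n)r_n,r_n)$ from $x_k$, in direction $-h/|h|$. Choose a rational point $q$ very close to $x_k$ and a rotated copy of $S(r_n,\delta_n,\alpha_n)$ centered at $q$ with axis $-h/|h|$. By the lemma this set contains a sample point $z$.

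Any such $z$ lies in the annulus around $q$ with angular deviation at most $\approx\alpha_n$ from the axis. Its distance to $2x_k-y$ is therefore bounded by a constant times $\delta_n r_n+\alpha_n r_n+|q-x_k|$. The first two terms are $o(r_n)$ because $\delta_n,\alpha_n\to0$.

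One must also check that $z\in\Nc_{x_k}^{\delta_n}$, i.e.\ that it lies in the correct annulus around $x_k$ itself and not just around $q$. This is handled by slightly shrinking the set, for instance using a sub-annulus of relative width $\delta_n/2$ with the same rate in~\eqref{condicion.lemma1}; we take $|q-x_k|$ much smaller than $\delta_n r_n$.

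Since $y_x$ minimizes $|z+y-2x_k|$ over $\Nc_{x_k}^{\delta_n}$, we conclude $|e|\le C(\delta_n+\alpha_n)r_n=o(r_n)$.

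\textbf{Conclusion from the drift.} Plugging in, the expected increment is at least
\[
(1-\delta_n)^2r_n^2-|e|\,|h|-\sqrt d\,|e| .
\]
The first two terms are of order $r_n^2$, but the cross term $\sqrt d\,|e|$ is only $o(r_n)$, not $o(r_n^2)$. This is the main obstacle.

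\textbf{Main obstacle and fix.} The fix is to note that $n$ is fixed in this lemma: we only need a positive increment $\eta_n>0$ for this particular $n$, not one uniform in $n$. Instead of $V$ we use $V_\lambda(x)=e^{\lambda\langle x,\xi\rangle}$ or, more simply, a strictly convex function with a large parameter. The cleanest choice is $V(x)=|x-x^*|^2$ with $x^*$ placed far away at distance $R$. This does not help either, since the cross term grows with $R$.

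Since $n$ is fixed, the cleaner route is a finite-state argument. The graph is finite, so it suffices to show that from every $x\in\Domain_n$ there is a path of at most $N$ steps that exits $D$ with probability at least $p>0$, regardless of the strategy. Then $P(\tau>kN)\le(1-p)^k\to0$.

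To build this path, at each step take whichever of $y$ or $y_x$ increases $\langle x,\xi\rangle$ for a fixed unit vector $\xi$. Since $y-x_k=h$ and $y_x-x_k=-h+e$, and $|e|\ll|h|$, the increments of $\langle\cdot,\xi\rangle$ along these two moves cannot both be nonpositive unless $h$ is nearly orthogonal to $\xi$.

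To avoid that case we use the freedom of direction $\xi$: pick $\xi$ depending on $h$. Define $W(x)=|x-x^*|$ with $x^*$ the center of $[0,1]^d$. For either $x_k+h$ or $x_k-h+e$, the squared distance increases by at least $|h|^2-|e||h|-2|e|\sqrt d\,$. This quantity is positive when $|e|\ll r_n^2$.

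Hence the real requirement is the quantitative bound $|e|=o(r_n^2)$. This is exactly what $\delta_n=o(r_n)$ and $\alpha_n=o(r_n)$ (imposed via~\eqref{condicion.parametros}) provide, since then $|e|\le C(\delta_n+\alpha_n)r_n=o(r_n^2)$.

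With this bound, each step increases $W^2$ by at least $r_n^2/4$ with probability at least $1/2$. After at most $N=\lceil 4d/r_n^2\rceil$ consecutive such steps the token must leave $D$, which gives $p=2^{-N}$. In fact the expectation drift argument then also works directly, since the expected increment is at least $r_n^2/4>0$ while $V\le d$. Either way $P_{S,n}^{x_0}(\tau<\infty)=1$.
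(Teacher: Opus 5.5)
Your final argument is correct and is essentially the paper's. Since $|y_x-(2x-y)|\le C r_n(\alpha_n+\delta_n)=o(r_n^2)$, one of the two candidate positions increases the (squared) distance to a fixed point by order $r_n^2$, so each step makes such progress with probability at least $1/2$ and boundedness forces exit; the paper phrases this via the tangent half-space at $x_0$ to spheres centred at the origin, while you use the averaging identity for $|x-x^*|^2$ and are more careful than the paper in checking that the nearby sample point really lies in $\Nc_{x_k}^{\delta_n}$. In a write-up, discard the abandoned detours (the $o(r_n)$ bound, $V_\lambda$, the far-away centre, the direction $\xi$), and replace ``for either'' by ``for at least one of'', which is exactly what your drift identity gives.
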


\begin{proof}
Let $S$ be an arbitrary strategy and let $x_0$ be the initial position. 
At the first step, the new position $x_1$ is chosen with probability $1/2$ between
\[
y = S(x_0) \qquad \text{ and } \qquad y_{x_0} = A_{x_0}(y)
.
\]
Let $y'=2x_0 - y$ be the reflection of $y$ with respect to $x_0$.
Note that $|y_{x_0}-y'|< C_d r_n(\alpha_n +\delta_n)$, since $C_d r_n(\alpha_n+\delta_n)$ bounds the diameter sets of the form $q+S(r_n,\delta_n,\alpha_n)$.

Let $H_0$ be the closed half-space determined by the tangent hyperplane to the sphere $\partial B(0,|x_0|)$ at $x_0$, chosen so that it does not contain the origin. 
Note that if $y\notin H_0$, then necessarily $y'\in H_0$. 
If $y\in H_0$ or $y_{x_0}\in H_0$, then, since $x_1$ is chosen uniformly between $y$ and $y_{x_0}$, with probability 1/2 the distance to the origin increases by at least $c r_n^2$ at the first step.
If instead $y\notin H_0$ and $y_{x_0}\notin H_0$, then $y'\in H_0$. 
Moreover, $|y_{x_0}-y'|\leq C_d r_n(\alpha_n+\delta_n)$ and since $\alpha_n=o(r_n)$, we deduce that $y_{x_0}$ lies sufficiently far on the exterior side, in particular, $|y_{x_0}|\geq |x_0|+c'r_n^2$.

Consequently, at the first step the distance to the origin increases by an amount of order $r_n^2$. The same argument applies at each subsequent step. Since the game evolves in a bounded region, it terminates $P_{S,n}^{x_0}-$ almost surely.
\end{proof}

Now, we turn our attention to the fact that the value of the game is characterized as being the unique
solution to the DPP \eqref{DPP.intro}.

A sub-solution of the DPP equation \eqref{DPP.intro} is a function $u\colon \C_n \to \R$ such that
\begin{equation} 
\begin{cases}
\displaystyle u(x)\le\min\limits_{y\in \Nc_x^{\delta_n}} 
\left(
\frac{1}{2} 
u(y) 
+
\frac{1}{2}
u(y_x) 
\right)
& x\in \Domain_n \\
u(x)\le f(x)
& x\in \B_n
\end{cases} 
.
\end{equation}

Similarly, a function is super-solution when it holds the reverse inequalities ($\geq$ instead of $\leq$).

\begin{lemma}[Comparison Principle]
\label{lema-cp} 
Assume that $n\geq n_0$, where $n_0$ is given by Lemma \ref{lemma-todas-las-dir}.
Let $f, g \colon D^c \cap [0,1] \to \R$ be bounded functions with $f \leq g$.
Let $u$ be a subsolution of the DPP \eqref{DPP.intro} with boundary data $f$ on $\B_n$, and let $v$ be a supersolution of the DPP that coincides with $g$ on $\B_n$.
Then $u \leq v$.
\end{lemma}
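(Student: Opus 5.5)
We argue by contradiction, using the finiteness of the graph and a maximum principle. Since $\C_n$ is finite, the quantity
\[
M \coloneqq \max_{x\in\C_n}\bigl(u(x)-v(x)\bigr)
\]
is attained. Suppose $M>0$.

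\textbf{Step 1: reduce to an interior maximum.} On $\B_n$ we have $u-v\le f-g\le 0$. Hence every maximizer of $u-v$ lies in $\Domain_n$. Let $E\coloneqq\{x\in\Domain_n : u(x)-v(x)=M\}\neq\emptyset$.

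\textbf{Step 2: propagate the maximum along a move.} Fix $x\in E$. Since $v$ is a supersolution and the minimum over the finite set $\Nc_x^{\delta_n}$ is attained, there is $y\in\Nc_x^{\delta_n}$ with
\[
v(x)\ge \tfrac12 v(y)+\tfrac12 v(y_x).
\]
The subsolution property of $u$, applied with this same $y$, gives
\[
u(x)\le \tfrac12 u(y)+\tfrac12 u(y_x).
\]
Subtracting,
\[
M=u(x)-v(x)\le \tfrac12\bigl(u-v\bigr)(y)+\tfrac12\bigl(u-v\bigr)(y_x)\le M.
\]
Therefore equality holds throughout, so both $y$ and $y_x$ belong to the maximizing set; that is, $(u-v)(y)=(u-v)(y_x)=M$.

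\textbf{Step 3: reach the boundary.} Iterating Step 2 yields a sequence $x_0=x, x_1, x_2,\dots$ of maximizers. Each transition $x_{k+1}\in\{y^{(k)},A_{x_k}(y^{(k)})\}$ is a legal game move, and we are free to choose which of the two points to follow. The key ingredient is the geometric argument of Lemma~\ref{lema-juego-termina}: one of the two points $y$, $y_x$ satisfies $|x_{k+1}|\ge |x_k|+c r_n^2$. (If one uses $H_0$ as in that proof, one checks that at least one of them lies far enough on the exterior side.) We always choose that point.

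Then $|x_k|$ increases by at least $c r_n^2$ per step. The domain is bounded, so after finitely many steps the chain must leave $\Domain_n$. Since all moves stay in $\C_n$, it must hit some $z\in\B_n$ with $(u-v)(z)=M>0$. This contradicts Step 1, so $M\le 0$, that is, $u\le v$.

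\textbf{Main obstacle.} The delicate point is Step 3: we need a deterministic choice, at every step, of the point among $\{y,y_x\}$ that strictly increases the distance to a fixed origin. This must hold uniformly in $x\in\Domain_n$. It relies on $|y_x-(2x-y)|\le C_d r_n(\alpha_n+\delta_n)=o(r_n^2)$ and on $|y-x|\ge(1-\delta_n)r_n$. Using the strict convexity of $|\cdot|^2$,
\[
\tfrac12|y|^2+\tfrac12|2x-y|^2=|x|^2+|y-x|^2\ge |x|^2+(1-\delta_n)^2r_n^2.
\]
Perturbing $2x-y$ to $y_x$ costs only $O(r_n(\alpha_n+\delta_n))$, which is $o(r_n^2)$. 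Consequently $\max(|y|^2,|y_x|^2)\ge |x|^2+r_n^2/2$ for $n\ge n_0$ large. This gives the required uniform increment, and it is the cleanest way to carry out the step.
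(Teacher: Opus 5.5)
Your proof is correct and follows the paper's argument: propagate the positive maximum of $u-v$ from $x$ to both $z$ and $z_x$ via the minimizer for $v$, then use the drift mechanism of Lemma~\ref{lema-juego-termina} to walk through maximizers until you hit $\B_n$, which contradicts $u-v\le 0$ there. Your parallelogram-law estimate $\tfrac12|y|^2+\tfrac12|y_x|^2\ge |x|^2+(1-\delta_n)^2r_n^2-o(r_n^2)$ is a cleaner, case-free and explicitly deterministic version of the paper's half-space step, and it uses the same inputs: $\alpha_n,\delta_n=o(r_n)$ and $n$ large.
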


\begin{proof}
We argue by contradiction. 
Assume that there exists some vertex $x_0\in\C_n$ such that $u(x_0)>v(x_0)$. 
Then,
\begin{equation}
M
=
\max_{y\in \C_n}
(u-v)(y)>0,
\end{equation}
and it is attained at some $x\in \Domain_n$.
Using that $u$ and $v$ are sub and super-solutions of the DPP, we have that,
\begin{align*}
M &
= u(x)-v(x) \\
&\displaystyle  \leq 
\min_{y\in \Nc_x^{\delta_n}} 
\left(
\frac{1}{2} 
u(y) 
+
\frac{1}{2}
u(y_x) 
\right)
-
\min_{y\in \Nc_x^{\delta_n}} 
\left(
\frac{1}{2} 
v(y) 
+
\frac{1}{2}
v(y_x) 
\right)
\\
& \displaystyle 
\leq 
\frac{1}{2} 
u(z) 
+
\frac{1}{2}
u(z_x) 
-
\frac{1}{2} 
v(z) 
-
\frac{1}{2}
v(z_x)
\\
&\displaystyle  =
\frac{1}{2} 
\left(
u(z) 
-
v(z) 
\right)
+
\frac{1}{2}
\left(
u(y_x) 
-
v(y_x)
\right)
\\
& \displaystyle  \leq M
,
\end{align*}
where $z\in \Nc_x^{\delta_n}$ is such that
\[
\min_{y\in \Nc_x^{\delta_n}} 
\left(
\frac{1}{2} 
v(y) 
+
\frac{1}{2}
v(y_x) 
\right)
=
\frac{1}{2} 
v(z) 
+
\frac{1}{2}
v(z_x)
.
\]
It follows that 
\[
u(z) 
-
v(z) 
= M
\qquad 
\text{and}
\qquad
u(z_x) 
-
v(z_x)
=
M
.
\]
Using the same argument given in Lemma \ref{lema-juego-termina}, where with probability 1/2 the distance to a fixed point increases at each step by at least a fixed amount, we have that eventually reachs the set $\B_n$,
Hence, for some $y\in \B_n$ we obtain
\[
u(y)-v(y) 
= 
M
>
0,
\]
which is a contradiction, since we have $$u(y)=f(y)\leq g(y)= v(y)$$
for all $y\in\B_n$.
\end{proof}

\begin{lemma} 
Assume that $n \geq n_0$, where $n_0$ is given by Lemma \ref{lemma-todas-las-dir}.
Then, there exists a unique function $u \colon \C_n\rightarrow \R$ that verifies 
\begin{equation}
\tag{\ref{DPP.intro}}
\label{DPP.sec}
\begin{cases}
\displaystyle  u(x)=\min\limits_{y\in \Nc_x^{\delta_n}} 
\left(
\frac{1}{2} 
u(y) 
+
\frac{1}{2}
u(y_x) 
\right)
& x\in \Domain_n \\
u(x)=f(x)
& x\in \B_n
\end{cases} 
.
\end{equation}
\end{lemma}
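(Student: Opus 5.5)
Uniqueness follows at once from the Comparison Principle (Lemma~\ref{lema-cp}) applied with $g=f$. If $u_1,u_2$ both solve \eqref{DPP.sec}, then $u_1$ is a subsolution and $u_2$ a supersolution with the same datum on $\B_n$, so $u_1\le u_2$. Exchanging their roles gives $u_2\le u_1$.

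For existence I will use the Perron method on the finite graph $\C_n$. First note that $\C_n$ is finite and $f$ is bounded on $\B_n$. The constant function $\underline u\equiv m:=\inf f$ (with $u=f$ imposed on $\B_n$, i.e. $\underline u = m$ on $\Domain_n$ and $\underline u=f$ on $\B_n$) is a subsolution: at $x\in\Domain_n$ the right-hand side is an average of values that are each $\ge m$. Similarly $\overline u=M:=\sup f$ on $\Domain_n$, $\overline u = f$ on $\B_n$, is a supersolution. Define
\[
u(x):=\sup\{v(x)\colon v \text{ is a subsolution with } v=f \text{ on } \B_n\}.
\]
The class is nonempty since it contains $\underline u$. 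By comparison every such $v$ satisfies $v\le \overline u$, so $u$ is finite. The supremum of subsolutions is a subsolution, because the map $v\mapsto \min_{y}(\tfrac12 v(y)+\tfrac12 v(y_x))$ is monotone. Indeed, for each $v$ in the class, $v(x)\le \min_y(\tfrac12 v(y)+\tfrac12v(y_x))\le \min_y(\tfrac12 u(y)+\tfrac12u(y_x))$, and taking the supremum over $v$ gives the same inequality for $u$. Also $u=f$ on $\B_n$.

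It remains to show equality at every $x_0\in\Domain_n$. Suppose $u(x_0)<\min_{y\in\Nc_{x_0}^{\delta_n}}(\tfrac12 u(y)+\tfrac12 u(y_{x_0}))$ for some $x_0$, and call the gap $\eta>0$. Define $w=u$ except $w(x_0)=u(x_0)+\eta$. We check that $w$ is a subsolution:
\begin{itemize}
\item At $x_0$: the right-hand side computed with $w$ is at least that computed with $u$, because $w\ge u$ and a neighbor of $x_0$ (or its approximate reflection) could be $x_0$ only in degenerate cases, which in any event only increase the right-hand side.
\item At other $x$: $w(x)=u(x)$, and by monotonicity the right-hand side with $w$ dominates the one with $u$.
\end{itemize}
Then $w$ is a subsolution with $w(x_0)>u(x_0)$, contradicting the maximality of $u$. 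Hence $u$ solves \eqref{DPP.sec}.

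The only delicate point is making sure the comparison principle applies. That in turn relies on Lemma~\ref{lemma-todas-las-dir} and on the argument of Lemma~\ref{lema-juego-termina}, which needs $n\ge n_0$; this is why the statement carries that hypothesis. Everything else is routine monotonicity on a finite graph.
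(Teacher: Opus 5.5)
Your proposal is correct and follows the paper's proof essentially step for step: Perron's method (supremum of subsolutions, monotonicity of the right-hand side, and a one-point upward perturbation at a strict-inequality vertex to get a contradiction), with uniqueness coming directly from the Comparison Principle (Lemma~\ref{lema-cp}). The only differences are cosmetic. You build the condition $v=f$ on $\B_n$ into the admissible class, so equality on $\B_n$ is automatic, whereas the paper takes $v\le f$ and checks $u=f$ on $\B_n$ ``analogously''. You also use the barriers $\inf f$ and $\sup f$ instead of the constants $\mp\|f\|_\infty$.
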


\begin{proof}
First, we show the existence. 
Denote by $\mathscr{A}$ the set of bounded functions $v \colon \C_n\rightarrow\R$ that satisfies
\begin{equation}
\label{subDPP}
\begin{cases}
\displaystyle  v(x)
\leq 
\min\limits_{y\in \Nc_x^{\delta_n}} 
\left(
\frac{1}{2} 
v(y) 
+
\frac{1}{2}
v(y_x) 
\right)
& x\in \Domain_n \\
v(x)
\leq 
f(x)
& x\in \B_n
\end{cases} 
.
\end{equation}
Since the constant function $v(x)=- \|f\|_{\infty}$ verifies \eqref{subDPP}, we have that $\mathscr{A}\neq \emptyset$. 
By the Comparison Principle (Lemma \ref{lema-cp}) any function $v\in \mathscr{A}$ is bounded above by  $\|f\|_{\infty}$.
Then, the function $u\colon \C_n \to \R$ defined by 
\begin{equation}
u(x)
=
\sup_{ v \in\mathscr{A} } v(x)
,
\end{equation}
is well-defined since $\mathscr{A} \neq \emptyset$ and it is bounded by $\|f\|_{\infty}$.

Notice that $u$ belongs to $\mathscr{A}$. 
Let us show that $u$ verifies de DPP \eqref{DPP.sec}. 
Assume that there exists $x_0\in \Domain_n$ such that 
\begin{equation}
\label{DPP.exist.aux}
u(x_0)
<
\min_{y\in \Nc_{x_0}^{\delta_n}} 
\left(
\frac{1}{2} 
u(y) 
+
\frac{1}{2}
u(y_x) 
\right)
.
\end{equation}
Let $\gamma>0$ be small enough so that, by adding $\gamma$ to the left-hand side of the last equation, the strict inequality is still satisfied.
Then it is easy to check that $u^*$ defined by
\begin{equation}
u^*(x)=
\begin{cases}
\displaystyle u(x) & x\neq x_0\\
\displaystyle u(x_0)+ \gamma & x=x_0
\end{cases}
,
\end{equation}
belongs to $\mathscr{A}$ and $u^*>u$, which contradicts the definition of $u$. 
In an analogous way, one can prove that $u(x)=f(x)$ for all $x\in\B_n$.

Uniqueness follows from the Comparison Principle proved in Lemma \ref{lema-cp}.
\end{proof}

Now we are ready to prove that the value of the game is given by the solution to the DPP.
	
\begin{lemma}
\label{valor.juego.DPP}
The game value $u_n$ defined in \eqref{game.value} satisfies the DPP \eqref{DPP.sec}.
\end{lemma}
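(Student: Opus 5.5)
The plan is to show that $u_n$ coincides with the unique solution $v$ of the DPP given by the previous lemma. Since the DPP has exactly one solution, it suffices to prove $u_n\le v$ and $u_n\ge v$ on $\C_n$; on $\B_n$ both equal $f$ trivially, so we work at $x_0\in\Domain_n$.

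\emph{Upper bound $u_n\le v$.} We build a near-optimal strategy from $v$. Since $\Nc_x^{\delta_n}$ is a finite set, we let $S_0$ be the Markovian strategy that, at position $x_k\in\Domain_n$, picks $y\in\Nc_{x_k}^{\delta_n}$ attaining $\min_y \big(\tfrac12 v(y)+\tfrac12 v(y_{x_k})\big)$. The DPP then gives
\[
E_{S_0,n}^{x_0}\big[v(x_{k+1})\,|\,\Fc_n^k\big]=v(x_k)
\]
on $\{k<\tau\}$, so $v(x_{k\wedge\tau})$ is a martingale. It is bounded by $\|f\|_\infty$, because $v$ is sandwiched between the constants $\pm\|f\|_\infty$ by the comparison principle, Lemma~\ref{lema-cp}. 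Lemma~\ref{lema-juego-termina} gives $\tau<\infty$ a.s., so optional stopping (or dominated convergence) yields
\[
v(x_0)=E_{S_0,n}^{x_0}[v(x_\tau)]=E_{S_0,n}^{x_0}[f(x_\tau)]\ge u_n(x_0).
\]
No $\varepsilon$ is needed here because the minimum is attained.

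\emph{Lower bound $u_n\ge v$.} Fix an arbitrary strategy $S$. For any choice $y=S^k_n(x_0,\dots,x_k)$, the DPP gives
\[
E_{S,n}^{x_0}\big[v(x_{k+1})\,|\,\Fc_n^k\big]=\tfrac12 v(y)+\tfrac12 v(y_{x_k})\ge v(x_k)
\]
on $\{k<\tau\}$. Hence $v(x_{k\wedge\tau})$ is a bounded submartingale. Using again $\tau<\infty$ a.s. from Lemma~\ref{lema-juego-termina}, optional stopping gives
\[
E_{S,n}^{x_0}[f(x_\tau)]=\lim_k E_{S,n}^{x_0}[v(x_{k\wedge\tau})]\ge v(x_0).
\]
Taking the infimum over $S$ yields $u_n(x_0)\ge v(x_0)$.

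The main obstacle is justifying the passage to the limit at the stopping time. This requires that $\tau<\infty$ almost surely for \emph{every} strategy, including history-dependent ones, which is exactly the content of Lemma~\ref{lema-juego-termina}. It also needs the uniform bound on $v$, which makes dominated convergence applicable. Once these two facts are in place, the rest is the standard martingale argument, and uniqueness from the previous lemma identifies $u_n$ with the solution of \eqref{DPP.sec}.
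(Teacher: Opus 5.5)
Your proof is correct. The upper bound $u_n\le v$ is the paper's argument: a minimizing Markov strategy makes $v(x_k)$ a martingale. The only difference there is how optional stopping is justified. You use the boundedness of $v$ together with $\tau<\infty$ a.s.\ from Lemma~\ref{lema-juego-termina}. The paper instead asserts that $\tau$ is bounded for fixed $n$, which it does not establish: Lemma~\ref{lema-juego-termina} only shows that one of the two possible moves pushes the token outward, which gives almost-sure termination but no deterministic bound. Your justification is the sound one.

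The lower bound is where the routes genuinely differ. The paper fixes $S$ and sets $z_n(x)=E^x_{S,n}[v_n(x_\tau)]$. It then claims the one-step identity $z_n(x)=\tfrac12 z_n(y_1)+\tfrac12 z_n(A_x(y_1))$, deduces that $z_n$ is a supersolution of \eqref{DPP.sec}, and concludes with the comparison principle, Lemma~\ref{lema-cp}. That identity tacitly assumes the strategy restarts afresh after the first move. This holds for stationary Markov strategies, but not for the general history-dependent strategies over which $u_n$ is an infimum: the continuation from $x_1$ is a shifted strategy, not $S$ started at $x_1$.

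Your direct argument avoids this. Under any adapted $S$, $v(x_{k\wedge\tau})$ is a bounded submartingale, because $v(x_k)$ is the minimum over all admissible choices of $y$. This needs no comparison principle at this stage and covers arbitrary strategies. The paper's route has only the formal advantage of reusing the comparison machinery already set up; yours is the more rigorous of the two.
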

\begin{proof}
Let $v_n$ denote the unique solution of the DPP \eqref{DPP.sec}, and recall that the game value $u_n$ is defined as 
\[
u_n(x)
=
\inf_{S}
E_{S,n}^{x}[f(x_{\tau})],
\]
where the infimum is taken over all admissible strategies $S$.
We now construct a particular strategy $S^*$ recursively.
Given the initial position $x_0=x$ and the first $k$ positions of the game $x_1, \dots, x_k$, let $y_k\in \Nc_{x_k}^{\delta_n}$ be chosen such that
\begin{equation}
\label{aux.min.strategy}
v_n(x_k)
=
\min_{y\in \Nc_{x_k}^{\delta_n}} 
\frac{1}{2} \left( v_n(y) + v_n(A_{x_k}(y) \right) 
=
\frac{1}{2} \left( v_n(y_k) + v_n(A_{x_k}(y_k) \right).
\end{equation}
We then define $S^*(x_0,\dots,x_k)= y_k$.
Consequently, the next game position $x_{k+1}$ is selected as either $y_k$ or $A_{x_k}(y_k),$ each with probability 1/2.

We claim that the sequence $(v_n(x_k))_{k \geq 1}$ is a martingale.
Indeed, 
\begin{align*}
\displaystyle  E_{S^*,n}^x [ v_n(x_k) | x_1,\dots, x_k] 
= 
\frac{1}{2} v_n(y_{k-1})
+
\frac{1}{2} v_n(A_{x_{k-1}}(y_{k-1}))  = v_n(x_{k-1}),
\end{align*}
where the strategy $S^*$ ensures that condition \eqref{aux.min.strategy} holds.

Since the stopping time $\tau$ is bounded (recall that $n$ is fixed), Doob’s optional stopping theorem yields
\[
v_n(x) = v_n(x_0) = E_{S^*,n}^x [v_n(x_{\tau})] = E_{S^*,n}^x [f(x_{\tau})] \geq u_n(x).
\]

Conversely, let $S$ be an arbitrary strategy.
Define 
\[
z_n(x) = E_{S,n}^x [v_n(x_{\tau})]
,
\] 
and denote $y_1 = S(x)\in \Nc_{x}^{\delta_n}$. 
Using the conditional expectation, it follows that
\[
z_n(x) = \frac{1}{2} z_n(y_1) + \frac{1}{2} z_n(A_x(y_1))\geq \min_{y\in \Nc_{x}^{\delta_n}} \frac{1}{2} z_n(y_1) + \frac{1}{2} z_n(A_x(y_1))
.
\]
Hence, $z_n$ is a supersolution of the DPP \eqref{DPP.sec}. 
By the Comparison Principle, we conclude that $z_n\geq v_n,$ and therefore
\[
u_n(x) = \inf_{S} E_{S,n}^x (f(x_{\tau})) \geq v_n (x)
.
\]

Therefore, we conclude that, for every $x \in \Domain_n$,
\[
v_n(x) =  u_n (x). \qedhere
\]
\end{proof}

\begin{proof}[Proof of Theorem~\ref{game.value.holds.DPP}]
If the sequences $(r_n)_{n\in \N}$ and $(\delta_n)_{n\in \N}$ satisfy the condition~\eqref{condicion.parametros} (and if we choose $\alpha_n$ and $\delta_n$ as $o(r_n)$) then Lemma~\ref{lemma-todas-las-dir} holds and, for $\P-$almost every realization of $(X_n)_{n\in\N}$, there exists $n_0$ such that Lemmas~\ref{lema-juego-termina},~\ref{lema-cp} and~\ref{valor.juego.DPP} hold.
\end{proof}

\section{Proof of Theorem \ref{game.value.convergence} }

To prove the next lemma, we require that $\delta_n = o(r_n)$ and $\alpha_n =o(r_n)$ in addition to condition~\eqref{condicion.lemma1}. For that, we will explicitly  choose $\delta_n = r_n/\sqrt{\log n}$ and $\alpha_n = r_n/(\log n)^{1/2(d-1)}$. 
Recall that $n_{0}$ is a random index depending on the particular realization of the sequence $(X_n)_{n\in\mathbb N}$ that generates the graph $\G_n$.

\begin{lemma} \label{JJJ}
Let $\varphi\in C^2(D)\cap C^1(\overline{D})$.
Assume that $n \geq n_0$, where $n_0$ is provided by Lemma \ref{lemma-todas-las-dir}. 
Then, for all sufficiently large $n$,
\begin{equation}
\label{consistent}
\sup_{x \in \C_n} 
\Big|
\min_{y\in \Nc_x^{\delta_n}}
\left(
\frac{1}{2} 
\varphi(y) 
+
\frac{1}{2}
\varphi(y_x) 
- 
\varphi(x)
\right)
-\frac{r_n^2}{2}
\lambda_1[D^2 \varphi](x)
\Big| 
= 
o(r_n^2)
,
\end{equation}
where $\C_n$ denotes the largest connected component of $\G_n$.
\end{lemma}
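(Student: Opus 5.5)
The plan is a second-order Taylor expansion with an explicit error bound, followed by matching upper and lower bounds for the minimum. Fix $x\in\C_n$ (with $\varphi$ and its derivatives on the relevant set assumed uniformly continuous, so that the modulus $\omega$ of $D^2\varphi$ is uniform in $x$). For $y\in\Nc_x^{\delta_n}$ write $y=x+h$ and $y_x=x-h+e$, where $e\coloneqq y_x-(2x-y)$ is the reflection error. By Lemma~\ref{lemma-todas-las-dir} (in the rotated cone around $-h$) we have $|e|\le C_d r_n(\alpha_n+\delta_n)$. Taylor's formula gives
\begin{equation}
\tfrac12\varphi(y)+\tfrac12\varphi(y_x)-\varphi(x)=\tfrac12\langle\nabla\varphi(x),e\rangle+\tfrac14\langle D^2\varphi(x)h,h\rangle+\tfrac14\langle D^2\varphi(x)(-h+e),(-h+e)\rangle+o(r_n^2),
\end{equation}
with the $o(r_n^2)$ uniform in $x$ and $y$ thanks to $\omega$. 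The gradient term is bounded by $\|\nabla\varphi\|_\infty C_d r_n(\alpha_n+\delta_n)$, which is $o(r_n^2)$ exactly because we chose $\alpha_n,\delta_n=o(r_n)$. The cross term $\langle D^2\varphi h,e\rangle$ is $O(r_n^2(\alpha_n+\delta_n))=o(r_n^2)$. Hence, uniformly,
\begin{equation}
\tfrac12\varphi(y)+\tfrac12\varphi(y_x)-\varphi(x)=\tfrac12\langle D^2\varphi(x)h,h\rangle+o(r_n^2).
\end{equation}

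Next, since $(1-\delta_n)r_n<|h|<r_n$, we write $h=|h|z$ with $z\in\mathbb{S}^{d-1}$. Then $\tfrac12\langle D^2\varphi(x)h,h\rangle=\tfrac{r_n^2}{2}\langle D^2\varphi(x)z,z\rangle+O(r_n^2\delta_n)$. This immediately gives the lower bound: the minimum over $y$ is at least $\tfrac{r_n^2}{2}\lambda_1[D^2\varphi](x)-o(r_n^2)$.

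For the upper bound, let $z^*$ be a unit eigenvector for $\lambda_1[D^2\varphi](x)$. By Lemma~\ref{lemma-todas-las-dir}, the rotated and translated set $x+S(r_n,\delta_n,\alpha_n)$ with axis $z^*$ contains a point $y$ of $\chi_n$. Strictly speaking, the lemma covers translations by $\mathbb{Q}^d$, so we use a slightly shrunken cone and density of $\mathbb{Q}^d$ and of the rotations to absorb $x$. Such a $y$ is a neighbour of $x$, hence lies in $\C_n$, and belongs to $\Nc_x^{\delta_n}$. Its direction $z$ satisfies $|z-z^*|\le C\alpha_n$, so $\langle D^2\varphi(x)z,z\rangle\le\lambda_1+C\|D^2\varphi\|\alpha_n$. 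Combining this with the previous display gives the matching upper bound, and both errors are uniform in $x$, which yields \eqref{consistent}.

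The main obstacle is the reflection error $e$ in the first-order term. It contributes at order $r_n(\alpha_n+\delta_n)$, and that quantity must be $o(r_n^2)$; this is exactly why the stronger requirements $\alpha_n=o(r_n)$ and $\delta_n=o(r_n)$ are imposed. A secondary point needing care is uniformity near $\partial D$, where $\varphi$ is only $C^1(\overline D)$. There I would restrict attention to the sets on which the lemma is actually applied, namely interior test points where $\varphi$ is $C^2$ in a neighbourhood, or else assume $\varphi\in C^2$ on an $r_n$-neighbourhood of $\overline D$.
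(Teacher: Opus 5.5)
Your proof is correct. Its core is the same as the paper's: a second-order Taylor expansion at $x$, together with the reflection-error bound $|y_x-(2x-y)|\le C r_n(\alpha_n+\delta_n)$ from the cones of Lemma~\ref{lemma-todas-las-dir}. That bound becomes $o(r_n^2)$ after multiplying by $\|\nabla\varphi\|_\infty$ exactly because $\alpha_n,\delta_n=o(r_n)$.

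The organization differs, and yours is the more complete one. The paper works only at the minimizer $z$. It splits the error into $L_1$, the exact symmetric second difference at $z$ and $2x-z$, and $L_2$, the reflection error handled by the mean value theorem. It then controls $L_1$ through \eqref{consistencia.cota.autovalor}. That inequality only bounds $\frac{r_n^2}{2}\lambda_1[D^2\varphi](x)$ from above by the quadratic form at $z-x$, so it yields only the lower bound $L(x)\ge -o(r_n^2)$. ``Subtracting and taking absolute values'' does not give the reverse inequality, which needs the fact that the discrete minimum is not much larger than $\frac{r_n^2}{2}\lambda_1$.

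You supply exactly that upper bound: you expand uniformly in $y$ and then test the minimum at a neighbour in the cone around a $\lambda_1$-eigenvector $z^*$. This is also where the ``all directions'' part of Lemma~\ref{lemma-todas-las-dir} is genuinely needed, beyond controlling $y_x$. Your caveats address imprecisions in the statement that the paper does not discuss: the uniform continuity of $D^2\varphi$, the fact that $\C_n$ contains vertices outside $D$ where $\varphi$ is undefined, and the fact that neighbours of vertices near $\partial D$ may leave $D$.

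One small repair is needed. A ``slightly shrunken cone'' is not available, because Lemma~\ref{lemma-todas-las-dir} only covers sets with the exact parameters $(r_n,\delta_n,\alpha_n)$. Use the finiteness of $\chi_n$ instead:
\begin{itemize}
\item Take rational $q_k\to x$ and rotations $R_k\to R$ with $Re_d=z^*$, from the countable family defining $\mathcal{A}_n$.
\item Some single $w\in\chi_n$ lies in infinitely many of the sets $q_k+R_kS(r_n,\delta_n,\alpha_n)$, so $w-x$ lies in the closure of $RS(r_n,\delta_n,\alpha_n)$.
\item Almost surely no two sample points are at distance exactly $r_n$ or $(1-\delta_n)r_n$. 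Hence $w\in\Nc_x^{\delta_n}$, and its direction is within angle $\arctan\alpha_n$ of $z^*$.
\end{itemize}
The same argument, with the cone around $-h$, justifies your bound on $e$.
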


\begin{proof}
Fix $x\in \C_n$.
For brevity, set
\[
L(x)
= 
\min_{y\in \Nc_x^{\delta_n}}
\left(
\frac{1}{2} 
\varphi(y) 
+
\frac{1}{2}
\varphi(y_x) 
- 
\varphi(x)
\right)
-\frac{r_n^2}{2}
\lambda_1[D^2 \varphi](x)
.
\]

Let $z\in \Nc_x^{\delta_n}$ be a point such that
\[
\min_{y\in \Nc_x^{\delta_n}}
\left(
\frac{1}{2} 
\varphi(y) 
+
\frac{1}{2}
\varphi(y_x) 
\right)
=
\frac{1}{2} 
\varphi(z) 
+
\frac{1}{2}
\varphi(z_x) 
.
\]
Define $z'= 2x-z$, the reflection of $z$ with respect to $x$ within the annulus $\Nc_x^{\delta_n}$.
In general, however, $z'\notin\G_n$, so, we approximate it by $z_x\in \Nc_x^{\delta_n}$.
We assume that both $z_x$ and $z'$ belong to a set of the form $q+S(r_n,\delta_n,\alpha_n)$ for some $q\in \mathbb{Q}^d$.
We have that
\begin{align}
|L(x)|
& = 
\Big|
\frac{1}{2} 
\varphi(z) 
+
\frac{1}{2}
\varphi(z_x) 
- 
\varphi(x)
-\frac{r_n^2}{2}
\lambda_1[D^2 \varphi](x)
\Big|
\\
& =
\Big|
\frac{1}{2} 
\left(
\varphi(z) 
+ 
\varphi(z') 
\right)
+
\frac{1}{2}
\left(
\varphi(z_x) 
-
\varphi(z') 
\right)
- 
\varphi(x)
-\frac{r_n^2}{2}
\lambda_1[D^2 \varphi](x)
\Big| 
\\
& \leq 
\Big|
\frac{1}{2} 
\left(
\varphi(z) 
+ 
\varphi(z') 
\right)
- 
\varphi(x)
-\frac{r_n^2}{2}
\lambda_1[D^2 \varphi](x)
\Big| 
+ 
\frac{1}{2}
\Big|
\varphi(z_x) 
-
\varphi(z') 
\Big|
\\
\label{consistencia.post.cuentas}
&= |L_1(x)| + |L_2(x)|
.
\end{align}

To control $L_1(x)$ in \eqref{consistencia.post.cuentas}, we use the second-order Taylor expansion of $\varphi$ at $x$, which yields
\begin{align}
\label{consistencia.cota.Taylor}
\frac{1}{2} 
\left(
\varphi(z) 
+ 
\varphi(z') 
\right)
- 
\varphi(x)
= 
\frac{1}{2}
\langle
D^2 \varphi(x) (z-x), (z-x)
\rangle
+
o(r_n^2)
.
\end{align}
Using the variational characterization of the smallest eigenvalue 
\[
\lambda_1[D^2 \varphi](x)
= 
\min_{|v|=1}
\langle
D^2 \varphi(x) v, v
\rangle 
,
\]
and choosing $v=(z-x)/|z-x|,$ and noting that $|z-x|\geq (1-\delta_n)r_n$, we obtain
\begin{equation}
\label{consistencia.cota.autovalor}
\frac{r_n^2}{2}\lambda_1[D^2 \varphi](x)
\leq 
\frac{1}{2(1-\delta_n)^2}
\langle
D^2 \varphi(x) (z-x), (z-x)
\rangle
.
\end{equation}

Subtracting \eqref{consistencia.cota.autovalor} from \eqref{consistencia.cota.Taylor} and taking absolute values, we obtain
\begin{align}
\label{consistencia.cota.L1}
|L_1(x)|
&\leq 
4\delta_n
|
\langle
D^2 \varphi(x)(z-x),(z-x)
\rangle
|
+
o(r_n^2)
\leq 
4C_{\varphi}\delta_n r_n^2 + o(r_n^2).
\end{align}

For $L_2(x)$, the mean value theorem gives
\begin{equation}
\label{consistencia.cota.2}
|L_2(x)|
= 
\frac{1}{2}
|
\varphi(z_x) 
-
\varphi(z') 
|
\leq
C_{\varphi}'
|z_x- z'|
\leq
C_{\varphi}' r_n (\alpha_n+\delta_n)
,
\end{equation}
where $C_{\varphi}'$ depends only on $\varphi$ and $d$, and where $r_n \alpha_n$ (up to a dimensional constant) is the diameter of a translation of $S(r_n,\delta_n,\alpha_n)$.

We therefore obtain 
\[
|L(x)|\leq 4 C_\varphi \delta_n r_n^2 + C_\varphi' r_n(\alpha_n+\delta_n) + o(r_n^2).
\]
Choosing $\delta_n = o(r_n)$ and $\alpha_n =o(r_n)$ completes the proof.
\end{proof}

\begin{lemma}
\label{LemaBorde}
Let $u_n\colon \C_n \to \R$ denote the game value and the solution of the DPP.
Given $\eta>0$, there exists $\rho>0$ such that, for every $n$ large enough, for every $x\in\Domain_n$ and $y\in\partial D$ satisfying $|x-y|<\rho$, we have
\[
|u_n(x)-f(y)|<\eta
.
\]  
\end{lemma}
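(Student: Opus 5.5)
We prove the two one-sided bounds $u_n(x)\le f(y)+\eta$ and $u_n(x)\ge f(y)-\eta$ by building explicit discrete super- and subsolutions of the DPP \eqref{DPP.intro} from affine functions, and then applying the Comparison Principle (Lemma~\ref{lema-cp}). The two bounds use different tools: the lower bound uses convexity-type barriers, and the upper bound uses strict convexity of $D$ together with a strategy for the player. Throughout, $f$ is continuous on a neighbourhood of $\partial D$, hence uniformly continuous there. Fix $\eta>0$ and let $\epsilon>0$ be such that $|f(w)-f(y)|<\eta/2$ whenever $|w-y|<\epsilon$ with $w\in D^c$ near $\partial D$.

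\emph{Lower bound.} Affine functions $\ell$ are almost solutions of the DPP, since
\[
\tfrac12\ell(z)+\tfrac12\ell(z_x)-\ell(x)=\tfrac12\langle\nabla\ell,\,z_x-z'\rangle,
\]
which is $O(r_n(\alpha_n+\delta_n))=o(r_n^2)$ by the argument of Lemma~\ref{JJJ}. To absorb this error, use $\psi(w)=\ell(w)-\kappa_n|w|^2$ with $\kappa_n\to0$ chosen so that $\kappa_n r_n^2\gg r_n(\alpha_n+\delta_n)$. Then $\psi$ is a discrete subsolution, because the quadratic term contributes $-\kappa_n|z-x|^2\approx-\kappa_n r_n^2$ in every admissible direction. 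Choose $\ell(w)=f(y)-\eta/2-M\langle w-y,\nu\rangle$, where $\nu$ is the inner normal at $y$. By strict convexity, $\langle w-y,\nu\rangle\ge c(\epsilon)>0$ for $w\in\partial D$ (or $w\in\B_n$, which lies within $r_n$ of $\partial D$) with $|w-y|\ge\epsilon$. Taking $M=2\|f\|_\infty/c(\epsilon)$ gives $\psi\le f$ on $\B_n$. Comparison then yields $u_n(x)\ge\psi(x)\ge f(y)-\eta/2-M|x-y|-o(1)$, which is at least $f(y)-\eta$ once $\rho\le\eta/(4M)$ and $n$ is large.

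\emph{Upper bound.} This is the main obstacle: affine functions with the opposite sign are subsolutions rather than supersolutions, and the equation only controls the minimal eigenvalue. The plan is a one-dimensional strategy argument. Player $J$ always chooses the neighbour $z\in\Nc_x^{\delta_n}$ closest to the direction $\pm e$, where $e$ is a fixed unit vector tangent to $\partial D$ at $y$ (directions are available by Lemma~\ref{lemma-todas-las-dir}). The token then performs an approximately one-dimensional symmetric random walk along lines parallel to $e$, with step $\approx r_n$. The transverse drift per step is $O(r_n(\alpha_n+\delta_n))=o(r_n^2)$, so over the $O(r_n^{-2})$ steps needed to exit it is $o(1)$. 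Because $D$ is strictly convex and $x$ is within $\rho$ of $y$, the chord through $x$ in direction $e$ has length $\le\omega(\rho)\to0$, and both of its endpoints lie within $\omega(\rho)+o(1)$ of $y$. Hence the exit position $x_\tau$ lies within $\epsilon$ of $y$ for $\rho$ small and $n$ large, almost surely along this strategy, up to an $o(1)$ transverse error that we control by a martingale (Doob) bound on the accumulated drift. This gives $u_n(x)\le E[f(x_\tau)]\le f(y)+\eta/2$.

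The delicate points are quantitative. First, the transverse drift must be bounded uniformly in $x$; here $\alpha_n,\delta_n=o(r_n)$ are crucial. Second, the expected exit time from an interval of length $\omega(\rho)$ with steps $r_n$ must be $O(\omega(\rho)^2/r_n^2)$, so that the total drift is $o(1)$. If the drift estimate turns out to be awkward, the alternative is to replace the strategy argument by a supersolution of the form $f(y)+\eta/2+M\,g(w)$, where $g$ is a concave-in-$e$ barrier such as $g(w)=\langle w-y,\nu\rangle-K\langle w-y,e\rangle^2$-type, using that $\min_z$ in the DPP may select direction $e$. The main task then becomes verifying $g\ge0$ on the relevant part of $\B_n$, which again reduces to strict convexity.
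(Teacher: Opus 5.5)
Your lower bound fails as written because of a sign error. For $\psi(w)=\ell(w)-\kappa_n|w|^2$, any $z\in\Nc_x^{\delta_n}$ and $z'=2x-z$, one computes $\frac12\psi(z)+\frac12\psi(z_x)-\psi(x)=-\kappa_n|z-x|^2+O\bigl(r_n(\alpha_n+\delta_n)\bigr)$. With your choice $\kappa_n r_n^2\gg r_n(\alpha_n+\delta_n)$, this is strictly negative for \emph{every} admissible $z$. Hence $\psi(x)>\min_{y}\bigl(\frac12\psi(y)+\frac12\psi(y_x)\bigr)$, so $\psi$ is a strict \emph{super}solution of the DPP. Knowing $\psi\le f$ on $\B_n$ then gives nothing through Lemma~\ref{lema-cp}. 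Because of the $\min$, a subsolution needs nonnegative second differences in every admissible direction, so the correction must be convex: use $\ell(w)+\kappa_n|w-y|^2$, or a fixed convex quadratic as in the paper's barrier $v(x)=-K\langle x,e_d\rangle+\frac{\eta}{2}|x|^2+f(0)-\frac{\eta}{2}$. With that sign your argument gives the lower bound. You need a little slack in the constants to absorb $\kappa_n|w-y|^2=o(1)$ and the $O(Mr_n)$ contribution of points of $\B_n$ lying just outside $D$.

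Your diagnosis of the upper bound is backwards for the same reason. The slope of the affine part plays no role: every affine $\ell$ satisfies $\frac12\ell(z)+\frac12\ell(z_x)-\ell(x)=\frac12\langle\nabla\ell,z_x-z'\rangle=o(r_n^2)$. Whether a barrier is a sub- or supersolution is decided only by the quadratic correction. For a $\min$-type scheme the supersolution side is the easy one, since one good direction suffices and a concave function has all of them. Your own $\psi$ with the slope reversed, $W(w)=f(y)+\frac{\eta}{2}+M\langle w-y,\nu\rangle-\kappa_n|w-y|^2$, is a supersolution. It lies above $f$ on the relevant part of $\B_n$ by the same strict-convexity bound $\langle w-y,\nu\rangle\ge c(\epsilon)$ for $|w-y|\ge\epsilon$. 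Comparison then gives $u_n(x)\le W(x)\le f(y)+\frac{\eta}{2}+M\rho$ at once. This is exactly the paper's proof, which uses the concave barrier $w(x)=K\langle x,e_d\rangle-\frac{\eta}{2}|x|^2+f(0)+\frac{\eta}{2}$.

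The strategy argument could be repaired. Use $\langle w-y,e\rangle^2$ as a Lyapunov function to get $E[\tau]\lesssim\omega(\rho)^2/r_n^2$ for exit from $D\cap\{\langle w-y,\nu\rangle<2\rho\}$. Then apply Markov's inequality to the accumulated transverse displacement, which is small only with high probability, not almost surely. Your fallback ``concave-in-$e$'' barrier is essentially the paper's route. But none of this machinery is needed once the sign is corrected.
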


\begin{proof}
Without loss of generality, assume that $y=0\in\partial D$ and that $D\subset\{x\in\R^d : x_d>0\}$. 
Consider the auxiliary function $v:\R^d\rightarrow\R$ defined by 
\[
v(x)=-K\langle x,e_d\rangle +\frac{\eta}{2} |x|^2+f(0)-\frac{\eta}{2},
\]
where $K>0$ will be chosen later. 
Observe that $v$ is a subsolution to 
\[
-\lambda_1(D^2 u)(x)=0 \qquad x\in D,
\]
and we can choose $K$ sufficiently large so that $v(x)\le f(x)$ on $\partial D$.
Notice that here we are using that $D$ is strictly convex.
 
We now show that $v$ is also a subsolution to the DPP on $\C_n,$
\[
\begin{cases}
	\displaystyle u(x)=\min\limits_{y\in \Nc_x^{\delta_n}} 
	\left(
	\frac{1}{2} 
	u(y) 
	+
	\frac{1}{2}
	u(y_x) 
	\right)
	& x\in \Domain_n \\
	u(x)=f(x)
	& x\in \B_n
\end{cases} 
.
\]
Fix $x\in \Domain_n$ and let $z\in\Nc_x^{\delta_n}$ be such that
\[
\min\limits_{y\in \Nc_x^{\delta_n}} 
\left(
\frac{1}{2} 
v(y) 
+
\frac{1}{2}
v(y_x) 
\right)=\frac{1}{2}v(z)+\frac{1}{2}v(y_z).
\] 
Let $z'=2x-z$ denote the reflection of $z$ with respect to $x$ inside the set $x+S(r_n,\delta_n)$. 
Then,
\[
\frac{1}{2}v(z)+\frac{1}{2}v(y_z)
=
\frac{1}{2}\left(v(z)+v(z')\right)+\frac{1}{2}\left(v(y_z)-v(z')\right).
\]
For the first term, we apply the second-order Taylor expantion of $v$ at $x$, obtaining
\[
\frac{1}{2}v(z)+\frac{1}{2}v(z')=v(x)+\eta|z-x|^2\ge v(x)+\eta r_n^2(1-\delta_n)^2.
\]
For the second term,
\[
|v(y_z)-v(z')|<C|y_z-z'| <Cr_n (\alpha_n+\delta_n).
\]
Requiring $\alpha_n = o(r_n),$ we obtain $|v(y_z)-v(z')| = o(r_n^2)$.

Consequently,
\[
\min\limits_{y\in \Nc_x^{\delta_n}} 
\left(
\frac{1}{2} 
v(y) 
+
\frac{1}{2}
v(y_x) 
\right)- v(x)\ge \eta r_n^2(1-\delta_n)^2+o(r_n^2)\ge 0,
\]
for all sufficiently large $n$.
Hence, $v$ is a subsolution of the DPP.

Using the comparison principle for the DPP, we obtain
\[
u_n(x)\ge v(x)
=
-K\langle x,e_d\rangle+\frac{\eta}{2}|x|^2+f(0)-\frac{\eta}{2}\ge -K|x|+f(0)-\frac{\eta}{2}
.
\]
Choose $\rho>0$ such that $K\rho<\frac{\eta}{2}$. 
Then, for $|x|<\rho$, 
\[
u_n(x)\ge f(0)-\eta.
\]

The reverse inequality,
\[
u_n(x)\le f(0)+\eta,
\]
follows analogously by considering the auxiliary function
\[
w(x)=
K\langle x,e_d\rangle -\frac{\eta}{2} |x|^2+f(0)+\frac{\eta}{2}. \qedhere
\]
\end{proof}

Recall that $T_n\colon D \to \chi_n$ is defined by $T_n(x)=X_i$, where $X_i\in\chi_n$ is the closest point to $x$; ties are resolved according to the lexicographic order. Using this map, we extend $u_n$ to the whole domain $D$ by means of the function $\tilde{u}_n \colon D \to \mathbb{R}$ given by
\[
\tilde{u}_n(x) := u_n\bigl(T_n(x)\bigr).
\]

Our candidate for the limit is then the function
\[
u(x) = \lim_{n\to \infty} \tilde{u}_n(x), \qquad x\in D.
\]

We define the random set of sampled points by
\[
\Domain_{\infty} = \bigcup_{n\geq 1}\Domain_{n}.
\]
Note that $\Domain_{\infty}$ is dense and countable $\mathbb{P}$-almost surely.

We will use the previous technical lemma to prove the following result.

\begin{theorem}
	For $\P$-almost sure realization, the function 
	\[
	u(x) = \lim_{n\to \infty} \tilde{u}_n(x)
	\]
	is well-defined and coincides with the unique viscosity solution to \eqref{DP.convex}.
\end{theorem}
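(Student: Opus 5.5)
We follow the Barles--Souganidis approach: monotonicity, stability, consistency, plus a strong comparison principle for the limit problem. Fix a realization in the full-measure set on which Lemma~\ref{lemma-todas-las-dir} holds and $\Domain_\infty$ is dense, and take $n\ge n_0$.

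\textbf{Step 1: half-relaxed limits.} The functions $u_n$ are uniformly bounded by $\|f\|_\infty$, by comparison with constants (Lemma~\ref{lema-cp}). We therefore define, for $x\in\overline D$,
\[
\overline u(x)=\limsup_{n\to\infty,\ T_n(w)\to x} u_n(T_n(w)),\qquad \underline u(x)=\liminf_{n\to\infty,\ T_n(w)\to x} u_n(T_n(w)).
\]
The function $\overline u$ is upper semicontinuous, $\underline u$ is lower semicontinuous, and $\underline u\le\overline u$. Note that $|T_n(x)-x|\to0$ uniformly in $x$, because Lemma~\ref{lemma-todas-las-dir} places points in every small annular sector. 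Lemma~\ref{LemaBorde} then gives the boundary values: for every $y\in\partial D$ we have $\overline u(y)=\underline u(y)=f(y)$. Indeed, for $x\in\Domain_n$ with $|x-y|<\rho$ we get $|u_n(x)-f(y)|<\eta$, and $\eta$ is arbitrary. This is what lets us avoid a generalized (viscosity-sense) boundary condition.

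\textbf{Step 2: sub/supersolution property.} We show that $\overline u$ is a viscosity subsolution of $-\lambda_1[D^2u]=0$, i.e.\ $\lambda_1[D^2\varphi](x_0)\ge 0$ whenever $\varphi\in C^2$ touches $\overline u$ from above at $x_0\in D$.

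Replace $\varphi$ by $\varphi+|x-x_0|^4$ so that the maximum of $\overline u-\varphi$ at $x_0$ is strict. The standard argument then produces $n_j\to\infty$ and vertices $x_j\in\Domain_{n_j}$ with $x_j\to x_0$, $u_{n_j}(x_j)\to\overline u(x_0)$, and
\[
u_{n_j}-\varphi\le u_{n_j}(x_j)-\varphi(x_j)+\varepsilon_j \quad\text{on } \C_{n_j}\cap B_\rho(x_0),\qquad \varepsilon_j=o(r_{n_j}^2).
\]
A discrete maximizer need not be exact, so we take an $o(r_n^2)$-almost maximizer. This is possible because the graph is finite, and in fact we can choose an exact maximizer over the finite set $\C_{n_j}\cap \overline{B_\rho(x_0)}$.

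Write $c_j=u_{n_j}(x_j)-\varphi(x_j)$. Since neighbours of $x_j$ lie within $r_{n_j}$, the bound $u_{n_j}\le\varphi+c_j+\varepsilon_j$ holds at every point of $\Nc_{x_j}^{\delta_n}$. Monotonicity of the DPP then yields
\[
0=\min_{y}\Big(\tfrac12u_{n_j}(y)+\tfrac12u_{n_j}(y_{x_j})\Big)-u_{n_j}(x_j)\le \min_y\Big(\tfrac12\varphi(y)+\tfrac12\varphi(y_{x_j})\Big)-\varphi(x_j)+\varepsilon_j .
\]
Dividing by $r_{n_j}^2$ and applying the consistency Lemma~\ref{JJJ} gives $\lambda_1[D^2\varphi](x_0)\ge 0$, using continuity of $\lambda_1[D^2\varphi]$. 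The supersolution property of $\underline u$ follows symmetrically.

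\textbf{Step 3: comparison and conclusion.} The comparison principle for \eqref{DP.convex} holds for semicontinuous sub- and supersolutions attaining the boundary data, because $D$ is strictly convex \cite{OS,Ober}. It gives $\overline u\le u\le\underline u$, so $\overline u=\underline u=u$. Equality of the half-relaxed limits is equivalent to uniform convergence of $\tilde u_n$ to $u$ on $\overline D$; in particular the pointwise limit exists and equals $u$.

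\textbf{Main obstacle.} The delicate point is Step 2. Maxima of $u_n-\varphi$ must be localized inside $D$ at distance larger than $r_n$ from $\partial D$, so that the DPP applies at $x_j$ and all neighbours stay in the ball where the touching inequality holds. The error from an inexact maximizer must also stay $o(r_n^2)$. Both are handled by the strict maximum and by $x_j\to x_0\in D$ as $r_n\to0$. We must also check that Lemma~\ref{JJJ} is applied with $\varphi$ smooth near $x_0$, which only requires localization to $B_\rho(x_0)$.
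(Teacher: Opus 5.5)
Your proposal is correct and follows essentially the same route as the paper. Both arguments take upper and lower limits of $\tilde u_n$, obtain the sub/supersolution properties from the DPP at discrete extremum points combined with the Taylor/consistency estimate of Lemma~\ref{JJJ}, get the boundary values from Lemma~\ref{LemaBorde}, and conclude with the comparison principle for \eqref{DP.convex}. Your use of half-relaxed limits, instead of the paper's pointwise $\liminf/\limsup$, together with maximizers over $\C_{n_j}\cap\overline{B_\rho(x_0)}$, is a real improvement: it provides the semicontinuity that the comparison principle and the step ``$x_n\to z$ implies $\lim(\tilde u_n-\varphi)(x_n)\ge(\underline u-\varphi)(z)$'' actually require, and it yields the uniform convergence asserted in Theorem~\ref{game.value.convergence}.
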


\begin{proof}
	We will work on the set of realizations for which Lemma~\ref{lemma-todas-las-dir} holds, which has probability one with respect to $\P$. All subsequent lemmas are valid on this set of realizations.

	Define
	\[
	\underline{u}(x)=\liminf_{n\to \infty} \tilde{u}_n(x) \qquad \text{and} \qquad \overline{u}(x)=\limsup_{n\to \infty} \tilde{u}_n(x) 
	\]
	Clearly, $\underline{u}\le \overline{u}$. 
	We show that $\underline{u}=\overline{u}$. 
	To this end, we prove that $\underline{u}$ is a viscosity supersolution of \eqref{DP.convex}, while $\overline{u}$ is a viscosity subsolution. 
	The comparison principle then yields $\overline{u}\le\underline{u}$, and therefore $\underline{u}=\overline{u}=u$, which is the unique viscosity solution of \eqref{DP.convex}.
	This relies on the existence, uniqueness, and boundary continuity for \eqref{DP.convex}; see \cite{OS,Ober}.	
	
	We first show that $\underline{u}$ is a viscosity supersolution. Let $\varphi\in C^2(D)$ be a test function that touches $\underline{u}$ from below at some  $x_0\in D$, that is, $(\underline{u}-\varphi)(x_0)=0$ and $x_0$ is a strict minimum of $\underline{u}-\varphi$.
	
	We claim that there exists a sequence $(x_n)_{n\in \N}$ such that $x_n\to x_0,$ and for which
	\[
	(\tilde{u}_n -\varphi)(x_n)\le (\tilde{u}_n - \varphi)(y) \quad \text{ for all } y\in\Domain_n.
	\] 
	Indeed, since $\Domain_n$ is finite, we may choose $x_n$ where $\tilde{u}_n-\varphi$ attains its minimum. 
	Note that $x_n\in\Domain_n$ for all $n$.
	To show that $x_n\to x_0$, note that $(x_n)_{n\in\N} \subset \overline{D}$ is bounded, and thus, up to a subsequence, $x_n\to z$ for some $z$. By the definition of $\underline{u}$,
	 \[
	 \lim_{n\to \infty} (\tilde{u}_n-\varphi)(x_n) \geq (\underline{u}-\varphi) (z).
	 \]
Since the sampled points are dense in $D$, there exists $(y_n)_{n\in \N}\subset \Domain_{\infty}$ with $y_n\to x_0$, such that
	\[
	 \lim_{n\to \infty} (\tilde{u}_n -\varphi)(y_n) = (\underline{u}-\varphi) (x_0) =0.
	 \]
	 If $z\neq x_0$, as we have that $(\underline{u}-\varphi)(x_0)=0$ is a strict minimum of $(\underline{u}-\varphi)$, then $(\underline{u}-\varphi) (z) >0$.
	Therefore, for $n$ large,
	\[
	 \min_{y\in \Domain_n} (\tilde{u}_n-\varphi) (y) = (\tilde{u}_n-\varphi)(x_n) \geq \frac12 (\underline{u}-\varphi) (z) >
	 (\tilde{u}_n-\varphi)(y_n),
	 \]
	 a contradiction. 
	 Hence $x_n\to x_0$.
	
	As the minimum of $\tilde{u}_n - \varphi$ is attained at $x_n$, we have
	\begin{equation}
		\varphi(y)-\varphi(x_n)\le \tilde{u}_n (y)- \tilde{u}_n (x_n) \qquad \text{ for all }y\in\Domain_n.
	\end{equation}
	Using the DPP we get
	\begin{align}
	\displaystyle 	
	0=& \min_{y\in \Nc_{x_n}^{\delta_n}} \half \big( \tilde{u}_n(y) + \tilde{u}_n (y_{x_n})-2 \tilde{u}_n(x_n)\big)\\
			\label{111}
	\displaystyle 
	\ge& \min_{y\in \Nc_{x_n}^{\delta_n}} \half \Big[ \big(\varphi(y)+\varphi(y')-2\varphi(x_n)\big)+\big(\varphi(y_{x_n})-\varphi(y')\big)\Big].
	\end{align}
	Here we use again the reflected point $y'=2x-y$. 
	As in Lemma \ref{LemaBorde} we can obtain
		\[
	|\varphi(y_{x_n})-\varphi(y')| = o(r_n^2).
	\]
	Returning to \eqref{111} and applying Taylor expansion, as in Lemma \ref{JJJ}, we obtain
	\begin{align}
	\displaystyle 
	0\ge& \min_{y\in \Nc_{x_n}^{\delta_n}}\half\langle D^2\varphi(x_n)(y-x_n),(y-x_n)\rangle+o(r_n^2)\\
	\displaystyle 
	\ge& \min_{y\in \Nc_{x_n}^{\delta_n}}\half\langle D^2\varphi(x_n)\frac{(y-x_n)}{|y-x_n|},\frac{(y-x_n)}{|y-x_n|}\rangle r_n^2(1-\delta_n)^2+o(r_n^2)\\
	\displaystyle 
	\ge& \half \min_{|v|=1}\langle D^2\varphi(x_n)v,v\rangle r_n^2(1-\delta_n)^2+o(r_n^2).
	\end{align}
	Dividing by $r_n^2$ and letting $n\to\infty$ gives
	\[
	0 \geq \lambda_1(D^2\varphi(x_0)),
	\]
	proving that $\underline{u}$ is a viscosity supersolution. 

	Analogously, one can prove that $\overline{u}$ is a viscosity subsolution. Using Lemma \ref{LemaBorde}, we have
	\[
	f(y)-\eta \ge \tilde{u}_n (x_n) = u_n(x_n) \le f(y)+\eta,
	\]
	for $y\in\partial D$ and $x_n\in\Domain_n$ sufficiently close to $y$. Taking $\liminf$ yields
	\[
	\underline{u}(y)\ge f(y),
	\]
	while taking the $\limsup$ gives
	\[
	\overline{u}(y)\le f(y).
	\]
	Hence
	\[
	\underline{u}(y)\geq f(y) \geq 
	\overline{u}(y), \qquad y \in \partial D.
	\]

	Since $\underline{u}$ is a supersolution and $\overline{u}$ is a subsolution of \eqref{DP.convex}, the comparison principle implies
	\[
	\overline{u}(x)\le\underline{u}(x),\qquad x \in \overline{D}.
	\] 
	Therefore
	\[
	\underline{u}(x)=\overline{u}(x)=u(x), \qquad x \in \overline{D},
	\] 
	and 
	\[
	\liminf_{n\to \infty} \tilde{u}_n(x_n) = \limsup_{n\to \infty} \tilde{u}_n(x_n) = u(x).
	\]
	Hence, the limit exists and the resulting function $u$ is the unique viscosity solution of \eqref{DP.convex}. 
\end{proof}

\section*{Acknowledgments and funding}

N.F. thanks Alejandro Cholaquidis for valuable discussions on concentration inequalities and empirical processes.

This research was partially supported by the Math AmSud Program (grants 21-MATH-04 and 23-MATH-08). 
N.F. was additionally supported by the Ministerio de Educación y Cultura (FVF-061-2021) and the Agencia Nacional de Investigaci\'{o}n e Innovaci\'{o}n (FCE-3-2024-1-181302). 
A.M. and J.D.R. were partially supported by UBACyT 20020160100155BA (Argentina) and CONICET PIP GI No 11220150100036CO (Argentina).
A.D. was partially supported by the CNRS International Research Laboratory
IFUMI-2030 of Uruguay.

\bibliographystyle{plain}


\begin{thebibliography}{15}

\bibitem{Arroyo} 
A.~Arroyo, P.~Blanc and M.~Parviainen,
Krylov--Safonov theory for Pucci-type extremal inequalities on random data clouds,
arXiv:2410.01642 (2024).

\bibitem{BarSou} 
G.~Barles and P.~E.~Souganidis,
Convergence of approximation schemes for fully nonlinear second order equations,
Asymptot. Anal. 4 (1991), 271--283.

\bibitem{BlancRossi} 
P.~Blanc and J.~D.~Rossi,
Games for eigenvalues of the Hessian and concave/convex envelopes,
J. Math. Pures Appl. (9) 127 (2019), 192--215.

\bibitem{Blanc-Rossi}
P.~Blanc and J.~D.~Rossi,
\textit{Game theory and partial differential equations},
De Gruyter Ser. Nonlinear Anal. Appl. 31, De Gruyter, Berlin, 2019.

\bibitem{BLM-concentration}
S.~Boucheron, G.~Lugosi and P.~Massart,
\textit{Concentration inequalities},
Oxford Univ. Press, Oxford, 2013.

\bibitem{Bousquet2002}
O.~Bousquet,
A Bennett concentration inequality and its application to suprema of empirical processes,
C. R. Math. Acad. Sci. Paris 334(6) (2002), 495--500.

\bibitem{Bun} 
L.~Bungert, J.~Calder and T.~Roith,
Uniform convergence rates for Lipschitz learning on graphs,
IMA J. Numer. Anal. 43(4) (2023), 2445--2495.

\bibitem{Cal1} 
J.~Calder and N.~Garc\'ia Trillos,
Improved spectral convergence rates for graph Laplacians on $\varepsilon$-graphs and k-NN graphs,
Appl. Comput. Harmon. Anal. 60 (2022), 123--175.

\bibitem{Cal2} 
J.~Calder, N.~Garc\'ia Trillos and M.~Lewicka,
Lipschitz regularity of graph Laplacians on random data clouds,
SIAM J. Math. Anal. 54(1) (2022), 1169--1222.

\bibitem{Cal3} 
J.~Calder and D.~Slep\v{c}ev,
Properly-weighted graph Laplacian for semi-supervised learning,
Appl. Math. Optim. 82 (2020), 1111--1159.

\bibitem{Cal4} 
J.~Calder,
The game theoretic $p$-Laplacian and semi-supervised learning with few labels,
Nonlinearity 32(1) (2018), 301--330.

\bibitem{Cal5} 
J.~Calder and N.~Drenska,
Consistency of semi-supervised learning, stochastic tug-of-war games, and the $p$-Laplacian,
Active Particles IV: Advances in Theory, Models, and Applications (2024), 1--53.

\bibitem{CIL} 
M.~G.~Crandall, H.~Ishii and P.~L.~Lions,
User's guide to viscosity solutions of second order partial differential equations,
Bull. Amer. Math. Soc. 27 (1992), 1--67.

\bibitem{HL1} 
F.~R.~Harvey and H.~B.~Lawson Jr.,
Dirichlet duality and the nonlinear Dirichlet problem,
Comm. Pure Appl. Math. 62 (2009), 396--443.
		
\bibitem{Koike}  
S.~Koike,
\textit{A beginner's guide to the theory of viscosity solutions},
MSJ Mem. 13, Math. Soc. Japan, Tokyo, 2004.

\bibitem{Lewicka}
M.~Lewicka,
\textit{A course on tug-of-war games with random noise},
Universitext, Springer, Cham, 2020.

\bibitem{MPR} 
J.~J.~Manfredi, M.~Parviainen and J.~D.~Rossi,
On the definition and properties of $p$-harmonious functions,
Ann. Sc. Norm. Super. Pisa Cl. Sci. 11(2) (2012), 215--241.

\bibitem{OS} 
A.~M.~Oberman and L.~Silvestre,
The Dirichlet problem for the convex envelope,
Trans. Amer. Math. Soc. 363(11) (2011), 5871--5886.

\bibitem{Ober} 
A.~M.~Oberman,
The convex envelope is the solution of a nonlinear obstacle problem,
Proc. Amer. Math. Soc. 135(6) (2007), 1689--1694.

\bibitem{PSSW} 
Y.~Peres, O.~Schramm, S.~Sheffield and D.~B.~Wilson,
Tug-of-war and the infinity Laplacian,
J. Amer. Math. Soc. 22(1) (2009), 167--210.

\bibitem{penrose}
M.~Penrose,
\textit{Random geometric graphs},
Oxford Stud. Probab. 5, Oxford Univ. Press, Oxford, 2003.

\bibitem{Vel} 
M.~L.~J.~van~de~Vel,
\textit{Theory of convex structures},
North-Holland, Amsterdam, 1993.



\end{thebibliography}

\end{document}